\documentclass[12pt, reqno]{amsart}
\usepackage[utf8]{inputenc}

\usepackage{import}
\usepackage{graphicx}
\usepackage{color}
\usepackage{transparent}
\usepackage{calc}
\usepackage{float}

\usepackage{overpic}
\usepackage{amsmath,amsthm,amssymb,amscd,amsfonts,amsbsy}
\usepackage{color}
\usepackage{float}
\usepackage{hyperref}
\usepackage{enumerate}
\usepackage{enumitem}    
\usepackage{mathrsfs}
\usepackage{mathtools}
\usepackage{thmtools}
\usepackage{comment}
\usepackage{thm-restate}
\usepackage{cleveref}
\usepackage{graphics, setspace}
\usepackage{listings,xcolor}
\usepackage[normalem]{ulem}
\usepackage[toc,page]{appendix}
\usepackage[top=3cm,bottom=2cm,left=1.5cm,right=1.5cm]{geometry}
\usepackage{soul}

\allowdisplaybreaks

\newcommand{\mathsym}[1]{{}}
\newcommand{\unicode}[1]{{}}

\newcommand{\R}{\ensuremath{\mathbb{R}}}

\newcommand{\ov}{\overline}

\newcommand{\set}[1]{\left\{#1\right\}}
\newcommand{\hausdorff}[1]{\operatorname{dim_H}\left(#1\right)}
\newcommand{\cal}[1]{\mathcal{#1}}
\newcommand{\abs}[1]{\left\lvert{#1}\right\rvert}
\newcommand{\Bigabs}[1]{\Big|{#1}\Big|}
\newcommand{\norm}[1]{\left\lVert{#1}\right\rVert}

\newcommand{\cl}[1]{\ensuremath{\overline{#1}}}
\newcommand{\Dom}[1]{\ensuremath{\operatorname{Dom}(#1)}}
\newcommand{\proj}[0]{\ensuremath{\operatorname{proj}}}
\newcommand{\opname}[1]{\ensuremath{\operatorname{#1}}}

\newtheorem {theorem}{Theorem A}
\newtheorem {definition}[theorem]{Definition}

\newtheorem {proposition}[theorem]{Proposition}
\newtheorem {corollary}[theorem]{Corollary}

\newtheorem {remark}{Remark}

\newtheorem {mtheorem}{Theorem}

\def\R{\mathbb R}
\def\N{\mathbb N}

\renewcommand{\subset}{\subseteq}

\graphicspath{{figures/}}

\title[On the Hausdorff dimension and Cantor set structure of sliding Shilnikov invariant sets]{On the Hausdorff dimension and Cantor set structure\\ of sliding Shilnikov invariant sets}

\author[M. G. C. Cunha, D. D. Novaes, and G. Ponce]
{Matheus G. C. Cunha$^{1}$, Douglas D. Novaes$^1$, Gabriel Ponce$^1$}

\address{$^1$Departamento de Matem\'{a}tica - Instituto de Matem\'{a}tica, Estat\'{i}stica e Computa\c{c}\~{a}o Cient\'{i}fica (IMECC) - Universidade
Estadual de Campinas (UNICAMP), \ Rua S\'{e}rgio Buarque de Holanda, 651, Cidade Universit\'{a}ria Zeferino Vaz, 13083-859, Campinas, SP,
Brazil
}
\email{mathgccunha@ime.unicamp.br}
\email{ddnovaes@unicamp.br}
\email{gaponce@unicamp.br}

\begin{document}
\subjclass[2020]{34A36, 37C29, 34C28, 28A78, 28A80}

\keywords{sliding Shilnikov connections, Hausdorff dimension, Filippov systems, invariant sets, attractor sets, Cantor sets, iterated function systems}

\maketitle

\begin{abstract}
The concept of sliding Shilnikov connection has been recently introduced and represents an important notion in Filippov systems, because its existence implies chaotic behavior on an invariant subset of the system. The investigation of its properties has just begun, and understanding the topology and complexity of its invariant set is of interest. In this paper, we conduct a local analysis on the first return map associated to a sliding Shilnikov connection, which reveals a conformal iterated function system (CIFS) structure. By using the theory of CIFS, we estimate the Hausdorff dimension of the local invariant set of the first return map, showing, in particular, that is a positive number smaller than $1$, and with one-dimensional Lebesgue measure equal to $0$. Moreover, we prove that the closure of the local invariant set is a Cantor set and retains both the Hausdorff dimension and Lebesgue measure of the  local invariant set. Furthermore, this closure consists of the local invariant set along with the set of all pre-images, under the first return map, of the visible fold-regular point contained in the connection.
\end{abstract}

\section{Introduction}
\label{sec:introduction}

When modeling various phenomena, it is often observed that the rules governing their evolution change abruptly at specific thresholds, introducing discontinuities in their models (see \cite{jeffrey18,jeffrey20} for a general discussion on discontinuities in applied models). These abrupt changes are commonly associated with processes involving decisions, or switches, such as those in neurons or electronic systems, light refraction, body collisions, changes in dry friction regimes, or any other scenarios that exhibit sudden shifts in behavior. In ecology, for example, the phenomenon of prey-switching describes a predator's adaptive diet in response to the availability of different prey species. This behavior, observed in many predator species, creates discontinuities in prey-predator models (see, for example, \cite{Piltz2014,vanLeeuwen2013}). Similar discontinuities are found in other applied models, such as prey-predator models with prey refuge \cite{Kivan2011}, mechanical systems \cite{example4}, electromagnetic processes \cite{example1}, and others. The mathematical framework used to model and understand these phenomena includes the concept of piecewise smooth differential systems. Therefore, obtaining a better understanding of the geometric and dynamical properties of these systems is highly valuable. For surveys on piecewise smooth dynamical systems and their applications, see \cite{example3,example5}.

These types of differential systems, however, raise a fundamental question: what constitutes a solution? In \cite{filippov}, Filippov used the theory of differential inclusions to address this issue. He formulated what is now known as \emph{Filippov's convention} for the trajectories of piecewise smooth differential systems. Systems that follow this convention are referred to as \emph{Filippov systems} (for discussions on other conventions, see \cite{jeffrey14,JDYU22,NJ15}). The set of discontinuities of a Filippov system is called {\it switching set}, which, for our purposes, will always be assumed to be a smooth manifold.

Filippov's convention and its related concepts will be formally defined in the following subsections but, before that, let us briefly discuss in an informal manner this convention and the main object of our study. First, for points on the switching manifold where the vector fields in both sides cannot be concatenated  to create a trajectory that crosses the switching manifold, the Filippov's convention induces a dynamics on the switching manifold, allowing trajectories to slide along it, a phenomenon known as {\it sliding dynamics}. Also, the Filippov's convention extends the classical concept of singularities for smooth vector fields by inducing new types of singularities on the switching manifold. In these scenarios, the trajectories of a Filippov system may asymptotically approach these singularities or reach them in finite time, potentially leading to a loss of uniqueness of trajectories. The combination of these new singularities and the sliding dynamics gives rise to unique global phenomena in Filippov systems, such as the {\it sliding Shilnikov connection} (see Definition \ref{definition:shilnikov_connection}) that has been recently introduced and discussed in \cite{shilnikovproblem}.

The sliding Shilnikov connection is an important notion in Filippov systems, as their existence implies chaotic behavior within an invariant subset of the system, as demonstrated in \cite{npv}. This phenomenon has practical applications in applied science. For instance, in \cite{Piltz2014}, numerical evidences of chaotic behavior were observed in a prey-switching Filippov-type prey-predator model. This observation was analytically explained in \cite{predatorprey} by proving that such a model exhibits a sliding Shilnikov connection. The study of the properties and applicability of this connection is in its early stages, and understanding the topology and complexity of its associated invariant sets is of interest.

In the following subsections, we will formally discuss Filippov's convention and its related concepts, introduce the definition of sliding Shilnikov connection, and present our main findings.

\subsection{Filippov systems}

Let $ V \subset \mathbb{R}^n $ be an open subset, and consider the following piecewise smooth vector field:
\begin{equation}\label{psvf}
Z(u) = \begin{cases}
    X(u), & \text{if } g(u) > 0, \\
    Y(u), & \text{if } g(u) < 0,
\end{cases} \quad u \in V,
\end{equation}
where $ X, Y \in \mathcal{C}^{l, \varepsilon}(V, \mathbb{R}^n)$, with $l\geq 1$ an integer, $0<\varepsilon\leq1$, and $g \in \mathcal{C}^1(V, \mathbb{R})$ is a function with $0$ as a regular value (that is, $ D g(u) : \mathbb{R}^n \to \mathbb{R} $ is surjective for all $ u \in g^{-1}(0) $). Its switching manifold is given by $ M = g^{-1}(0)$. Recall that a function $ f : V \subset \mathbb{R}^n \to \mathbb{R}^m $ is said to be of class $ \mathcal{C}^{l, \varepsilon}(V, \mathbb{R}^m) $ if $ f \in \mathcal{C}^l(V, \mathbb{R}^m) $ and its $ l $-th derivative, $ D^l f $, satisfies the $ \varepsilon $-H\"{o}lder condition, that is, there exists a constant $ L $ such that $\norm{D^l f(x) - D^l f(y)}_{\opname{op}} \leq L   \abs{x - y}^{\varepsilon}$ for all $ x, y \in V $, where $\norm{\cdot}_{\opname{op}}$ is the standard norm among linear operators and $\abs{\cdot}$ is the usual euclidian norm on $\R^n$. When the context is clear, we may abbreviate this as $ \mathcal{C}^{l,\varepsilon}(V) $ or $ \mathcal{C}^{l, \varepsilon} $.

The piecewise smooth vector field \eqref{psvf} is concisely denoted as $ Z = (X,Y)_g $ (or simply $ Z = (X, Y) $). The space of all piecewise smooth systems of the form \eqref{psvf} is denoted by $ \Omega_g^{l, \varepsilon}(V, \mathbb{R}^n) \cong \mathcal{C}^{l, \varepsilon}(V, \mathbb{R}^n) \times \mathcal{C}^{l, \varepsilon}(V, \mathbb{R}^n) $, allowing us to endow it with the product topology. When the context is clear, we may abbreviate this as $ \Omega_g^{l, \varepsilon}(V) $, $ \Omega_g^{l, \varepsilon}, \Omega^{l, \varepsilon}(V) $, or simply $ \Omega^{l, \varepsilon} $.

The Filippov's convention establishes that the local trajectories of $Z$ (i.e. local solutions of the differential system $\dot{u} = Z(u)$) correspond to solutions of the differential inclusion
\begin{equation}
\label{equation:differential_inclusion}
\dot{u}\in\mathfrak{F}_Z(u),\,\, u\in V,
\end{equation}
where  $\mathfrak{F}_Z:V\leadsto\R^n$ is the following set-valued function
\begin{equation*}
\label{equation:local_trajectories}
\mathfrak{F}_Z(u) := \begin{cases}
	\set{X(u)}, &\quad\text{if } g(u)>0,\\
	\set{(1-s)X(u)+s\,Y(u) : t\in[0,s]}, &\quad\text{if } g(u)=0,\\
	\set{Y(u)}, &\quad\text{if } g(u)<0.
	\end{cases}
\end{equation*}
We recall that $\varphi:I\to V$, defined on an open interval $I\subset\R$, is said to be a solution of the differential inclusion \eqref{equation:differential_inclusion}, if it is an absolutely continuous function satisfying $\dot{\varphi}(t)\in\mathfrak{F}_Z(\varphi(t))$ for almost every $t\in I$. For an introduction on differential inclusions, see \cite{AC84}. 

The local trajectories of \eqref{psvf} have an intuitive geometric interpretation. To explore this, let $ \varphi^t_F(u) $ denote the flow of a vector field $ F: V \subset \mathbb{R}^n \to \mathbb{R}^n $  at time $ t $ starting from $ u $, and also define 
\begin{equation}
Fg(u) := \langle F(u), \nabla g(u) \rangle,
\end{equation}
where $\langle\cdot,\cdot\rangle$ denotes the usual inner product of $\R^n$. For points in $ V $ where $ g(u) \neq 0 $, the local trajectory corresponds to the local trajectories of either $ X $ or $ Y $, depending on whether $ g(u) > 0 $ or $ g(u) < 0 $, respectively. To describe the local solutions for points on the switching manifold $ M $, we first distinguish between some open regions on $ M $ (see Figure \ref{fig:filippov}):

\begin{itemize}
 \item The points on $ M$ satisfying $X g(u)  Y g(u) > 0$  define the \emph{crossing region $M^c$}. This implies that there exists $t_1<0<t_2$ such that $g(\varphi_X^{t}(u))>0$ for $t\in(t_1,0)$ and $g(\varphi_Y^{t}(u))<0$ for $t\in(0,t_2)$, or  $g(\varphi_X^{t}(u))>0$ for $t\in(0,t_2)$ and $g(\varphi_Y^{t}(u))<0$ for $t\in(t_1,0)$,  so that both trajectories can be concatenated at $u$ to form a local trajectory of $Z$ at $u$.

 \item The points on $M$ satisfying $X g(u)>0$ and $Y g(u)<0$ define the \emph{escaping region $M^e$}. This implies that there exists $t_2>0$ such that $g(\varphi_X^t(u)) > 0$ for $t \in (0,t_2)$ and $g(\varphi_Y^t(u)) < 0$, so that both trajectories also cannot be concatenated at $u$ to form a trajectory of $Z$.

    \item The points on $M$ satisfying $X g(u)<0$ and $Y g(u)>0$ define the \emph{sliding region $M^s$}. This implies that there exists $t_1<0$ such that $g(\varphi_X^{t}(u))>0$ for $t\in(t_1,0)$ and $g(\varphi_Y^{t}(u))<0$ for $t\in(t_1,0)$, so that both trajectories cannot be concatenated at $u$ to form a trajectory of $Z$.
    \end{itemize}

\begin{figure}[h!]
\centering
%\vspace{0.2cm}
\begin{overpic}[width=0.8\linewidth]{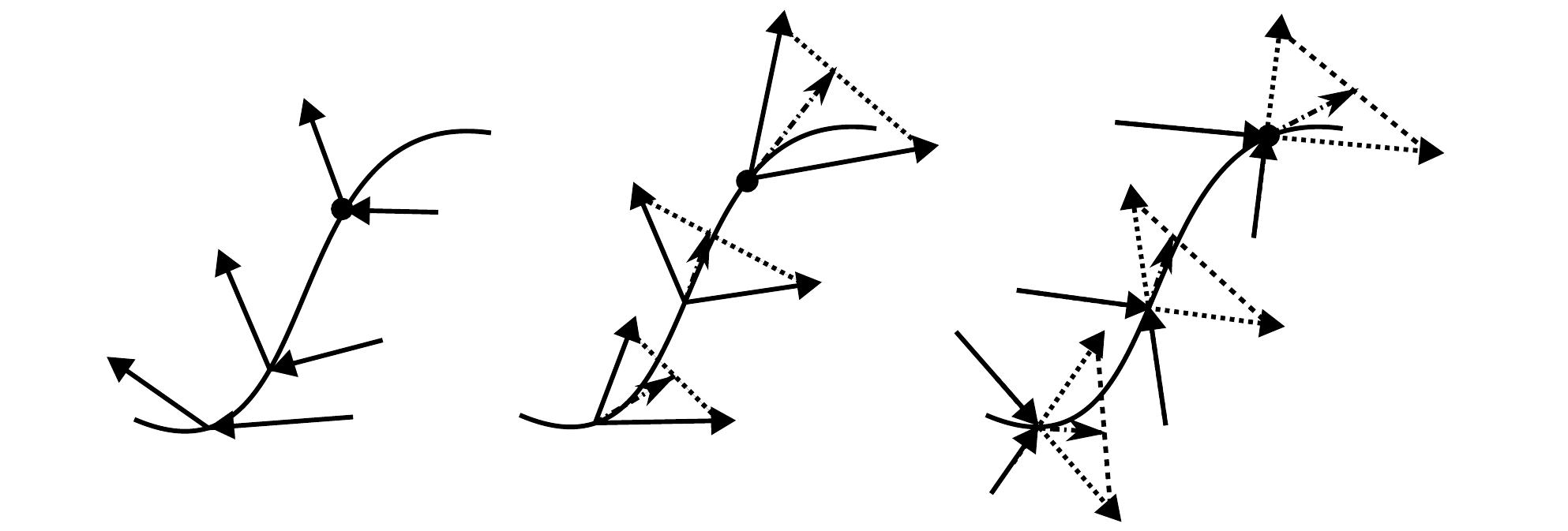}
%\begin{overpic}[grid,tics=2,width=0.8\linewidth]{filippov.pdf}
\put(21.5,18){$u$}
\put(47.5,20){$u$}
\put(79,26.5){$u$}
\put(3.5,6){$M^c$}
\put(28.5,6){$M^e$}
\put(58,6){$M^s$}
\put(17,29){$X(u)$}
\put(42,29){$X(u)$}
\put(72,27){$X(u)$}
\put(25,17){$Y(u)$}
\put(53,20){$Y(u)$}
\put(82,20){$Y(u)$}
\put(54,30.5){$\tilde{Z}(u)$}
\put(87,29){$\tilde{Z}(u)$}
\end{overpic}
%\vspace{0.2cm}
\caption{A graphical representation of the crossing, escaping and sliding regions on the switching manifold.}
\label{fig:filippov}
\end{figure}

For a point $u\in M^c$, the local trajectory of \eqref{psvf} at $u$ is uniquely determined as a suitable concatenation of the local trajectories of $X$ and $Y$ at $u,$ as previously described.

For a point  $u \in M^{s,e} := M^{s} \cup M^{e}$, the local trajectories of  $X$  and  $Y$  at  $u$  cannot be concatenated, as they are either both approaching or both departing from  $M$ at  $u$. However, for each  $u \in M^{s,e}$, there exists a unique vector within the convex combination  $\mathfrak{F}_Z(u)$  that is tangent to  $M$  at  $u$. This vector is given by:
\begin{equation*}\label{sliding}
\tilde{Z}(u) := \dfrac{Y g(u)X(u) - X g(u)Y(u)}{Y g(u)-X g(u)}, \quad u\in M^{s,e}.
\end{equation*}
Since $\tilde{Z}(u) \in T_u M = T_u M^{s,e}$ for every $u \in M^{s,e}$, $\tilde{Z}(u)$ defines a vector field on $M^{s,e}$, referred to as the \textit{sliding vector field}. Additionally, because $\tilde{Z}(u) \in \mathfrak{F}_Z(u)$ for all $u \in M^{s,e}$, any local trajectory of $\tilde{Z}(u)$ satisfies the differential inclusion \eqref{equation:differential_inclusion} and, therefore, corresponds to a local trajectory of the Filippov vector field \eqref{psvf}. It should be noted that the local trajectories of $X$ or $Y$ at $u$ can eventually be concatenated with the local trajectory of $\tilde{Z}$ at $u$ to form additional local trajectories of $Z$ at $u$. Consequently, the uniqueness of local trajectories is not guaranteed for points in $M^{s,e}$.

The sliding dynamics, described above,  naturally introduces a first new type of singularity of the Filippov system $Z$, corresponding to the singularities of the sliding vector field $\tilde{Z}$. These are points $u^*\in M^{s,e}$ where $\tilde{Z}(u^*)=0$, known as \emph{pseudo-equilibria of $Z$}. Notably, trajectories of $\tilde{Z}$ can asymptotically approach a pseudo-equilibrium, while  trajectories of $X$ and $Y$ can either reach or depart from it in finite time. This type of singularity is crucial to the definition of sliding Shilnikov connections. A pseudo-equilibrium is considered hyperbolic if it is a hyperbolic singularity of $\tilde{Z}$. Additionally, if $u^* \in M^s$ is an unstable hyperbolic focus of $\tilde{Z}$, or if $u^* \in M^e$ is a stable hyperbolic focus of $\tilde{Z}$, then $u^*$ is referred to as a \emph{hyperbolic pseudo-saddle-focus} (this corresponds to the point $p$ in Figure \ref{fig:shilnikov_connection}).

Finally, we consider the set of \emph{tangency points} $M^t$, which consists of points $u \in M$ where $X g(u) Y g(u) = 0$. A point $u\in M^t$ is called a tangency point of $X$ if $X g(u) = 0$, or a tangency point of $Y$ if $Y g(u) = 0$. There are many possible configurations of points in $M^t$, leading to different definitions of their local trajectories. As a result, the uniqueness of local trajectories is also not guaranteed at points in $M^t$. In the following, we will introduce the concept of a {\it visible fold-regular point}, a specific type of tangency point that occurs in sliding Shilnikov connections.

A tangency point  $u \in M^t$  is referred to as a visible fold of  $X$  (resp.  $Y$) if  $X^2g(u) := X(Xg)(u) > 0$  (resp.  $Y^2g(u) := Y(Yg)(u) < 0$). Conversely, if the inequalities are reversed, the point $u$  is called an invisible fold of  $X$  (resp.  $Y$). A visible/invisible fold  $u \in M$  of  $X$  (resp.  $Y$) is called a visible/invisible fold-regular point if  $Yg(u) \neq 0$  (resp.  $Xg(u) \neq 0$). If  $Yg(u) > 0$ (resp.  $Xg(u) < 0$), the point lies on the boundary of the sliding region,  $\partial M^s$. Conversely, if  $Yg(u) < 0$  (resp.  $Xg(u) > 0$), it lies on the boundary of the escaping region,  $\partial M^e$. For instance, the point  $q$  in Figure \ref{fig:shilnikov_connection} corresponds to a visible fold-regular point lying on  $\partial M^s$.

\begin{remark}\label{fold-regular-properties}
Visible fold-regular points have several important properties (see \cite{TEIXEIRA199015}), of which we will highlight two.

\begin{enumerate}[label= (A\arabic*), ref = (A\arabic*)]
\item\label{remark:a1} Firstly, the sliding vector field $\tilde{Z}$ is always transverse to these points, which means that trajectories of $\tilde{Z}$ either reach or depart from them transversely within a finite time.

\item\label{remark:a2} Secondly, when the dimension of the space is greater than or equal to $3$ ($n \geq 3$), a visible fold-regular point $q$ is never isolated, which means that there exists a neighborhood $U$ around $q$ such that $U \cap \partial M^{s,e}$ is a set consisting entirely of visible fold-regular points.
\end{enumerate}
\end{remark}

As a consequence of the above property \ref{remark:a1}, the local trajectory of $Z$ at a visible fold-regular point $u$ is determined by an appropriate combination of local trajectories of $X$, $Y$, and $\tilde{Z}$ at $u$.

\subsection{Sliding Shilnikov connection} The concept of sliding Shilnikov connection was recently introduced in \cite{shilnikovproblem}. Roughly speaking, it consists of a trajectory $\Gamma$ of $Z$, passing though a visible fold-regular point $q\in\partial M^{s,e}$ and connecting a hyperbolic pseudo-saddle-focus $p$ to itself asymptotically on one side and in finite time on the other side (see Figure \ref{fig:shilnikov_connection}).

Some of its dynamical properties, such as chaotic behavior, were further explored in \cite{npv}. It has also demonstrated significant applied importance, as shown in \cite{predatorprey}, by proving that a family of prey-switching Filippov-type prey-predator models exhibits chaotic behavior, which had previously been supported only by numerical evidence in \cite{Piltz2014}.

In what follows, we introduce the definition of sliding Shilnikov connection.

\begin{definition}[Sliding Shilnikov Connection]
\label{definition:shilnikov_connection}
Let $Z=(X,Y)\in\Omega^{1,\varepsilon}$ be a Filippov system with a hyperbolic pseudo-saddle-focus $p\in M^s$ (resp. $p\in M^e$) and a visible fold-regular point $q\in \partial M^s$ (resp. $q\in \partial M^e$), which is a visible fold point of $X$. Assume that:\\
\begin{enumerate}
    \item The trajectory of $\tilde{Z}$ passing through $q$ converges to $p$ backward in time (resp. forward in time), that is, $\lim_{t\to-\infty}\varphi^{t}_{\Tilde{Z}}(q) = p$ (resp. $\lim_{t\to+\infty}\varphi^{t}_{\Tilde{Z}}(q) = p$).\\
    \item The trajectory of $X$ passing through $q$ reaches $M^s$ (resp. $M^e$) in a finite time $t_q > 0$ (resp. $t_q<0)$ at $p$, that is, $\varphi^{t_q}_{X}(q)=p$ and $g(\varphi^{t}_X(q)) \neq 0$, for any $t\in(0,t_q)$ (resp. $t\in(t_q,0)$).\\
\end{enumerate}
Then, the sliding loop $\Gamma$ passing through $q$ and connecting $p$ to itself is called a \emph{sliding Shilnikov connection} (see Figure \ref{fig:shilnikov_connection}).
\end{definition}

\begin{figure}[h!]
\centering 
\begin{overpic}[width=0.7\linewidth]{figure2_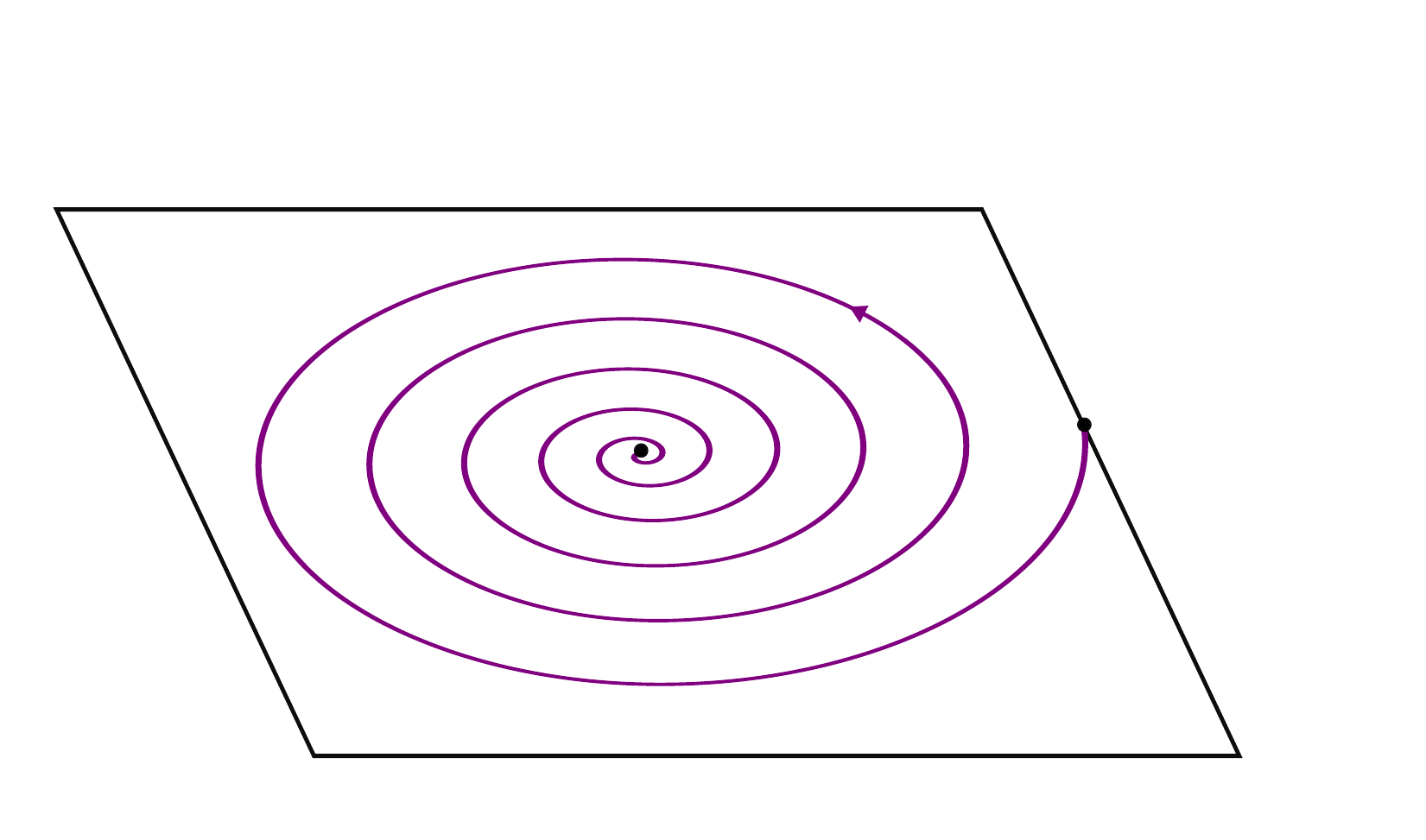}
%\begin{overpic}[grid,tics=5,width=0.7\linewidth]{shilnikov_connection.pdf}
\put(57,54){$\Gamma$}
\put(49,25){$p$}
\put(80.8,24.8){$q$}
\put(9,39){$M^s$}
\put(95.5,11){$M$}
\put(82.5,9){$\partial M^s$}
\end{overpic}
%\vspace{0.2cm}
\caption{Representation of a sliding Shilnikov connection. The point $p\in M^s$ is a hyperbolic pseudo-saddle-focus and the point $q\in \partial M^s$ is a visible fold-regular point for $X$. The forward trajectory of $Z$ at $q$ follows the flow of the vector field $X$ until it reaches, in finite time, the sliding region $M^s$ at the hyperbolic pseudo-saddle-focus $p$. The backward trajectory of $Z$ at $q$ follows the backward flow of the sliding vector field $\tilde{Z}$ which approaches asymptotically to the hyperbolic pseudo-saddle-focus $p$.}
\label{fig:shilnikov_connection}
\end{figure}

In \cite{npv}, it was demonstrated the existence of a neighborhood $B\subset\R^3$ of $q$ such that, for $\gamma:=B\cap\partial M^{s,e}$, which is a curve of visible fold-regular points (see \ref{remark:a2} from Remark \ref{fold-regular-properties}), a first return map  $\pi: \Dom{\pi} \to \gamma$ is well-defined on a subset $\text{Dom}(\pi)\subset \gamma$. This map captures the complete dynamics of the Filippov system $Z$ in a neighborhood of the sliding Shilnikov connection $\Gamma$. The dynamics of $\pi$ exhibits a rich structure with many interesting properties. For instance, in \cite{npv}, it was shown that the restriction of $\pi$ to an invariant set is topologically conjugate to a Bernoulli shift with infinite topological entropy. In \Cref{subsec:firstreturn}, we will provide details on the construction of the first return map $\pi$.

The main goal of this study is to examine, from a local perspective, certain geometric properties of the invariant set of the first return map restricted to $U$, $\pi_U:=\pi|_{\textrm{Dom}(\pi)\cap U}$, where $U \subset \gamma$ is a neighborhood of $q$. Such invariant set is given by
\begin{equation}
\label{equation:invariantset}
\Lambda_U := \set{w\in U: \pi^k(w)\in \textrm{Dom}(\pi)\cap U, \text{ for all } k\geq0}.
\end{equation}
More specifically, our goal is to estimate the Hausdorff dimension, denoted by $\hausdorff{\cdot}$, and the one-dimensional Lebesgue measure, denoted by $\opname{m}_1(\cdot)$, as well as to explore the Cantor set topological structure for both the invariant set and its closure. 

As we will show, the first return map can be decomposed into maps exhibiting expanding behavior, such that the functions corresponding to their inverses are contractions (see Figure \ref{fig:cifs}). This brings us into the realm of \emph{Iterated Function Systems (IFS)}, or more specifically, \emph{Conformal Iterated Function Systems (CIFS)} when the functions meet suitable criteria. We will use tools from IFS theory to analyze the geometric aspects of $\Lambda_U$ mentioned before (see \Cref{sec:cifs}).

\subsection{Main result}
Our main findings can be summarized as follows.

\begin{mtheorem}\label{theorem}
Let $V\subset\R^3$ be an open subset and consider a Filippov system $Z = (X,Y)\in\Omega_g^{1,1}(V, \R^3)$ possessing a sliding Shilnikov connection $\Gamma$ passing through a visible fold-regular point $q\in\partial M^{s,e}$. Consider the first return map $\pi:\Dom{\pi}\subset\gamma \to \gamma$ associated to $\Gamma$. Then, there exists a neighborhood $U\subset\gamma$ of $q$ such that:
\begin{enumerate}[label= (\alph*), ref = Statement (\alph*)]
	\item\label{theorem:hausdorff_and_lebesgue}
	The invariant set $\Lambda_U$ satisfies $0<\hausdorff{\Lambda_U} < 1$ and $\opname{m}_1(\Lambda_U)=0$. 
	\item\label{theorem:closure_of_lambda}
The closure of the invariant set, $\ov{\Lambda_U},$ is a Cantor set,  $\hausdorff{\ov{\Lambda_U}}=\hausdorff{\Lambda_U}$, and  $\opname{m}_1(\ov{\Lambda_U})=\opname{m}_1(\Lambda_U)$. Furthermore, $\ov{\Lambda_U}=\Lambda_U\ \dot{\cup}\ Q_U$, where $Q_U = \left(\bigcup_{k\geq0}\pi^{-k}(q)\right)\cap U$ is a countable set.
\end{enumerate}
\end{mtheorem}

Recall that a metric space $K \neq \emptyset$  is called a \emph{Cantor set} if it is compact, totally disconnected (the only connected subsets are singletons), and perfect (every point is an accumulation point). It is well-known that all non-empty metric spaces with these topological properties are homeomorphic to each other (see, for example, \cite[Chapter 2, Theorem 67 and Theorem 73]{cantor_spaces}).

The proof of Theorem \ref{theorem} is made in Section \ref{sec:proof}, and is based on iterated function systems theory, which is introduced in  \Cref{basic}.

\section{Hausdorff dimension of attractor sets of iterated function systems}\label{basic}
This section introduces essential concepts and results required to prove Theorem \ref{theorem}.  We begin with the definition of the Hausdorff dimension and discuss some of its basic properties. Following this, we explore the concept of iterated function systems and the conditions under which they are conformal. Finally, we present results for estimating the Hausdorff dimension of their attractor sets.

\subsection{Hausdorff Dimension}
\label{sec:hausdorff}
The Hausdorff dimension is a notion of dimension related to the ``size'', in a measure sense, of a set. More precisely, let us consider the diameter of an arbitrary subset $V\subset \R^d$,
$$\opname{diam}(V) := \sup\set{\abs{x-y}:x,y\in V}.$$
Given a set $S$ and a real number $s\geq0$, for each $\delta > 0$ we look at all countable (possibly finite) covers of $S$ with subsets of diameter at most $\delta$ (also called a \emph{$\delta$-cover}) and calculate the quantity
$$\cal{H}^s_{\delta}(S) := \inf\set{\sum_{k\geq1}\opname{diam}(V_k)^s : \text{ $\set{V_k}_{k\geq1}$ is a $\delta$-cover of $S$}}.$$
We define the \emph{$s$-dimensional Hausdorff measure} of $S$ as the limit
$$\cal{H}^s(S) := \lim_{\delta\to0}\cal{H}^s_{\delta}(S).$$
The \emph{Hausdorff dimension} of $S$, denoted by $\hausdorff{S}$, is then determined by 
$$\hausdorff{S} := \sup\set{s \geq 0 : \cal{H}^s(S) = +\infty} = \inf\set{s \geq 0 : \cal{H}^s(S) =  0},$$
which is well-defined.

Important properties of the Hausdorff dimension are discussed in \cite{hausdorff_dimension} and \cite[Chapter 2]{falconer}. The properties that will be relevant to our discussion are listed below:

\begin{enumerate}[label = (B\arabic*), ref = Property (B\arabic*)]
    \item \label{hausdorffprop:monotonicity}
    If $S_i\subset S_j$, then $\hausdorff{S_i} \leq \hausdorff{S_j}$ (\cite[Theorem 2(1)]{hausdorff_dimension}).\\
    \item \label{hausdorffprop:stability}
    If $\set{S_k}_{k\geq1}$ is a countable collection of sets, then $\hausdorff{\bigcup_{k\geq1}S_k} = \sup_{k\geq1}\set{\hausdorff{S_k}}$ (\cite[Theorem 2(2)]{hausdorff_dimension}).\\
    %\item\label{hausdorffprop:countable_set}
    %(\cite[Theorem 2.3]{hausdorff_dimension}) If $S$ is a countable set, then we have $\hausdorff{S} = 0$.\\
    \item \label{hausdorffprop:invariance}
    If $f$ is a bi-Lipschitz map, then $\hausdorff{S} = \hausdorff{f(S)}$, which implies that diffeomorphic compact sets have the same Hausdorff dimension (\cite[Theorem 2(5)]{hausdorff_dimension}).\\
    \item \label{hausdorffprop:lebesgue_zero}
    If $\hausdorff{S} < 1$, then the one-dimensional Lebesgue measure of $S$ is $0$ (\cite[Theorem 2(8)]{hausdorff_dimension}).\\
    \item \label{hausdorffprop:totally_disconnected}
    If $\hausdorff{S} < 1$, then $S$ is totally disconnected (\cite[Proposition 2.5]{falconer}).\\
\end{enumerate}

\subsection{Iterated Function Systems (IFS)}

For this section, we assume that $K \subset \R^d$ is a compact subset satisfying $K = \cl{\opname{Int}(K)}$, and that $I$ is a countable (finite or infinite) index set.

An \emph{iterated function system (IFS) on $K$} is a family of contractions on $K$, $\mathcal{F} = \set{f_i:K\to K}_{i\in I}$, that is, a set of functions such that, for each $i\in I$, there exists $c_i<1$ for which
\[|f_i(x)-f_i(y)|\leq c_i |x-y|, \quad \text{for all } x,y\in K.\]

Given an IFS $\mathcal{F} := \set{f_i}_{i\in I}$, we consider finite words of $I$ of the form $\eta = (\eta_1,\ldots,\eta_k)\in  \bigcup_{j\geq1} I^j$, $k\geq 1$. For such a word $\eta$, we define the corresponding function $f_{\eta}$ as
$$f_{\eta} = f_{(\eta_1,\ldots,\eta_k)} := f_{\eta_1}\circ\cdots\circ f_{\eta_k}.$$
A key concept concerning an IFS is its {\it attractor set}. To define it, let $\proj: I^{\N} \to 2^K$ be the projection map from the symbol space $I^{\N}$ into $K$, that is, for each $\omega = (\omega_1, \omega_2, \ldots) \in I^{\N}$, $\proj(\omega)$ is defined satisfying
\begin{equation}\label{proj}
\proj(\omega) = \bigcap_{k\geq1}f_{\omega|_k}(K),
\end{equation}
where $\omega|_k := (\omega_1, \ldots, \omega_k)$, for $k\geq 1$.

Then, the attractor set of the IFS is defined as
\begin{equation}\label{atractset}
\Delta := \proj\left(I^{\N}\right) = \bigcup_{\omega\in I^{\N}} \bigcap_{k\geq1}f_{\omega|_k}(K).
\end{equation}

This set satisfies the self-similarity property, that is
$\Delta = \bigcup_{i\in I}f_i(\Delta)$. When the index set $I$ is finite, there is only one non-empty set that satisfies this property, which may not be the case when $I$ is infinite. However, the attractor set is the largest set that satisfies the self-similarity property (see \cite[Remark of Section 3]{mauldin}).

\begin{remark}\label{rem:proj}
It is worth noting that if there exists $0<s<1$ for which the contraction constants satisfy $c_i \leq s$, the set in \eqref{proj} becomes a singleton for each $\omega \in I^{\mathbb{N}}$. In this case, $\proj$ can also be regarded as a function from $I^{\mathbb{N}}$ into $K$, in a minor abuse of notation.
\end{remark}

Also, we should note that, if $\mathcal{F}'\subset\mathcal{F}$ are IFS (we say that $\mathcal{F}'$ is a sub-system of $\mathcal{F}$), then their respective attractor sets, namely $\Delta'$ and $\Delta$, satisfy $\Delta'\subset\Delta$.

Certain results on the Hausdorff dimension of the attractor set of an IFS provide bounds in terms of the contraction constants. For a finite IFS, we have the following result:

\begin{proposition}[{\cite[Propositions 9.6 and 9.7]{falconer}}]
\label{prop:calculation}

Let $\set{f_1, \ldots, f_k}$ be an IFS on $K$, with attractor set $\Delta$ and satisfying the following condition: given any $1\leq i\leq k$, there exists $0<b_i\leq c_i<1$ such that 
\[b_i|x-y| \leq |f_i(x)-f_i(y)| \leq c_i|x-y|, \quad \text{for all } x,y\in K.\] 
Besides, let us assume that $f_i(\Delta) \cap f_j(\Delta) = \emptyset$, for every $i\neq j$. Then, 
\[s \leq \hausdorff{\Delta} \leq t,\]
where $s$ and $t$ are the unique real numbers satisfying 
\[\sum_{1\leq i\leq k} b_i^s = 1 \quad \text{and}\quad  \sum_{1\leq i\leq k} c_i^t = 1.\]
\end{proposition}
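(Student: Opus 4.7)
My plan is to realize the invariant set $\Lambda_U$ as (essentially) the limit set of a conformal iterated function system built from the inverse branches of the first return map $\pi$, and then derive the three claims from the CIFS dimension theory (Proposition \ref{prop:calculation} and its infinite counterpart) together with the elementary properties of Hausdorff dimension recalled in Section \ref{sec:hausdorff}.

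First, I would carry out a local analysis of $\pi$ near the regular-fold point $q$ (this is the role of Subsection \ref{subsec:firstreturn}). Geometrically, a point $\xi \in \gamma$ close to $q$ flows forward under $X$ into the sliding region $M^s$, spirals around the hyperbolic pseudo-saddle focus $p$ along the sliding vector field $\tilde Z$ a certain number $n \in \mathbb{N}$ of times, exits $M^s$ across $\partial M^s$, and finally flows back under $X$ to $\gamma$. This integer $n$ partitions $\operatorname{Dom}(\pi) \cap U$ into a countable disjoint family of intervals $\{J_n\}_{n\geq 1}$ on each of which $\pi$ is a $\mathcal{C}^{1,1}$ diffeomorphism onto a fixed interval $X \subset \gamma$ containing $q$. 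The spiraling around the unstable focus produces exponential expansion of $\pi|_{J_n}$ growing with $n$; dually, the inverse branches $f_n := (\pi|_{J_n})^{-1}$ are $\mathcal{C}^{1,1}$ contractions with bilipschitz constants $b_n \leq c_n$ decaying geometrically in $n$, and $\operatorname{diam}(f_n(X)) \to 0$ with $f_n(X) \to \{q\}$.

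Next, in Subsection \ref{subsec:cifs} I would verify that $\mathcal{F} = \{f_n\}_{n\geq 1}$ satisfies the conformal hypotheses: in dimension one, this amounts to the open set condition (immediate from $f_n(X)\cap f_m(X) = \emptyset$ for $n\neq m$) together with the bounded-distortion property furnished by the $\mathcal{C}^{1,1}$ regularity. I would then identify $\Lambda_U$ with the limit set $\Delta$ of $\mathcal{F}$ via the coding $\xi = p(\omega)$ iff $\pi^k(\xi) \in J_{\omega_{k+1}}$ for all $k\geq 0$. With this identification, both strict inequalities in $0 < \hausdorff{\Lambda_U} < 1$ reduce to pressure estimates. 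For the lower bound, I would restrict to any finite sub-IFS $\mathcal{F}' = \{f_{n_1}, \ldots, f_{n_m}\} \subset \mathcal{F}$: by Proposition \ref{prop:calculation} its attractor $\Delta' \subset \Lambda_U$ has Hausdorff dimension at least the unique $s>0$ solving $\sum_j b_{n_j}^s = 1$, so $\hausdorff{\Lambda_U} \geq s > 0$ by Property \ref{hausdorffprop:monotonicity}. For the upper bound, the geometric decay of $c_n$ makes $\sum_n c_n^t$ converge and be less than $1$ for some $t<1$; applying the infinite-CIFS analogue of Proposition \ref{prop:calculation} (à la Mauldin--Urbanski) gives $\hausdorff{\Lambda_U} \leq t < 1$.

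Finally, Property \ref{hausdorffprop:hausdorfflessone} immediately yields $\operatorname{m}(\Lambda_U) = 0$ and total disconnectedness of $\Lambda_U$, which persists after adjoining the single point $q$. For the Cantor conclusion I still need compactness and perfectness of $\Lambda_U \cup \{q\}$: compactness follows because $\Lambda_U$ is closed by construction (intersection of closed sets in its definition) and bounded inside $\overline U$, while $q$ is a limit of $\Lambda_U$ since $f_n(\xi_0)$ accumulates at $q$ for any fixed $\xi_0 \in \Lambda_U$; perfectness at each $\xi = p(\omega) \in \Lambda_U$ comes from the self-similarity $\Delta = \bigcup_n f_n(\Delta)$ applied inside deep cylinders $f_{\omega|_k}(\Delta)$, producing infinitely many distinct nearby points as $k$ varies. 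The main obstacle I anticipate is the first step: rigorously extracting from the $\mathcal{C}^{1,1}$ composition of the sliding flow near the focus and the transverse $X$-flow through the regular-fold both the geometric decay $c_n \sim \rho^n$ needed for the upper bound and matching lower bilipschitz bounds $b_n$ needed for a non-trivial lower bound on the dimension; everything downstream is then a bookkeeping application of CIFS theory and the Hausdorff-dimension properties of Section \ref{sec:hausdorff}.
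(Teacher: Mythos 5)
Your proposal does not prove the statement in question. The statement is Proposition~\ref{prop:calculation} itself, the finite-IFS dimension estimate (Falconer's Propositions 9.6 and 9.7), which the paper does not prove but simply cites. What you have written is instead an outline of the proof of the main theorem (Theorem~\ref{thm:dimension}): construction of the inverse branches of the first return map, verification of the conformal conditions, identification of $\Lambda_U$ with the limit set, and the Cantor-set argument. Worse, your argument explicitly \emph{invokes} Proposition~\ref{prop:calculation} (``by Proposition~\ref{prop:calculation} its attractor $\Delta'$ has Hausdorff dimension at least the unique $s>0$ solving $\sum_j b_{n_j}^s=1$''), so as a proof of that proposition it is circular; none of the dynamical material about $\pi$, the focus $p$, or the regular-fold point $q$ is relevant to it, since the proposition is a purely metric statement about an arbitrary finite system of bi-Lipschitz contractions.

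A genuine proof has two independent halves. For the upper bound, cover $\Delta$ by the cylinder sets $f_{\omega}(X)=f_{\omega_1}\circ\cdots\circ f_{\omega_k}(X)$ over all words $\omega$ of length $k$; the upper Lipschitz bounds give $\operatorname{diam}(f_{\omega}(X))\leq c_{\omega_1}\cdots c_{\omega_k}\operatorname{diam}(X)$, and if $t$ solves $\sum_i c_i^t=1$ then $\sum_{\modulo{\omega}=k}\left(c_{\omega_1}\cdots c_{\omega_k}\right)^t=\left(\sum_i c_i^t\right)^k=1$, so $\cal{H}^t_{\delta}(\Delta)\leq\operatorname{diam}(X)^t$ for every $\delta>0$ (letting $k\to\infty$), hence $\cal{H}^t(\Delta)<\infty$ and $\hausdorff{\Delta}\leq t$; note this half does not use the $b_i$ nor the disjointness hypothesis. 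For the lower bound one needs a mass distribution argument: with $s$ solving $\sum_i b_i^s=1$, define a Borel probability measure $\mu$ on $\Delta$ by assigning mass $b_{\omega_1}^s\cdots b_{\omega_k}^s$ to $f_{\omega}(\Delta)$; since $\Delta$ is compact and $f_i(\Delta)\cap f_j(\Delta)=\emptyset$ for $i\neq j$, the pieces are separated by some $d>0$, and combining this separation with the lower bounds $b_i$ one shows $\mu(B(x,r))\leq C r^s$ for all small $r$, whence $\cal{H}^s(\Delta)>0$ and $\hausdorff{\Delta}\geq s$ by the mass distribution principle. This is exactly where the hypotheses $b_i\modulo{x-y}\leq\modulo{f_i(x)-f_i(y)}$ and the disjointness of the $f_i(\Delta)$ are indispensable, and none of this appears in your proposal.
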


As a consequence, we obtain the following lower bound for the Hausdorff dimension of the attractor set of a non-trivial IFS.

\begin{corollary}
\label{prop:hausdorffzero}

Let $\mathcal{F} = \set{f_i}_{i\in I}$ be an IFS having at least two different functions, $f_{i_1}$ and $f_{i_2}$, satisfying $b\abs{x-y} \leq \abs{f_{i_j}(x)-f_{i_j}(y)}$, for some $b>0$ and $j=1,2$. Then $\hausdorff{\Delta}>0$.
\end{corollary}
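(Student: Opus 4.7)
The plan is to reduce the statement to the two-map finite case covered by \Cref{prop:calculation}. First I would extract from $\mathcal{F}$ the sub-system $\mathcal{F}' = \set{f_{i_1}, f_{i_2}}$ and denote its invariant set by $\Delta'$. Since $\mathcal{F}' \subset \mathcal{F}$, the observation made immediately before \Cref{prop:calculation} gives $\Delta' \subset \Delta$, so by \ref{hausdorffprop:monotonicity} it suffices to show that $\hausdorff{\Delta'} > 0$.

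Next I would verify the hypotheses of \Cref{prop:calculation} for $\mathcal{F}'$. For each $j \in \set{1,2}$ the map $f_{i_j}$ is a contraction on $X$ (being part of an IFS), so there exists $c_{i_j} < 1$ with $\modulo{f_{i_j}(x)-f_{i_j}(y)} \leq c_{i_j}\modulo{x-y}$ for every $x,y \in X$. Combining this with the hypothesized lower bound $b\modulo{x-y} \leq \modulo{f_{i_j}(x)-f_{i_j}(y)}$ yields the bi-Lipschitz estimates required by \Cref{prop:calculation}, and in particular forces $b \leq c_{i_j} < 1$. The disjointness condition $f_{i_1}(\Delta') \cap f_{i_2}(\Delta') = \emptyset$ is immediate from the standing IFS assumption $f_{i_1}(X) \cap f_{i_2}(X) = \emptyset$, because $\Delta' \subset X$.

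Applying \Cref{prop:calculation} to $\mathcal{F}'$ then yields $\hausdorff{\Delta'} \geq s$, where $s$ is the unique solution of $b^s + b^s = 1$; explicitly, $s = \log 2 / \log(1/b) > 0$ because $0 < b < 1$. Combining this with monotonicity, one gets $\hausdorff{\Delta} \geq \hausdorff{\Delta'} \geq s > 0$, which is the desired conclusion.

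There is essentially no real obstacle in this argument: the heavy lifting is already done inside \Cref{prop:calculation}, and the corollary is a direct packaging of its lower bound. The only point requiring a moment of care is that the uniform lower Lipschitz constant $b$ must be strictly less than $1$ for $s$ to be positive; this is not an extra hypothesis but a consequence of the $f_{i_j}$ already being contractions on $X$.
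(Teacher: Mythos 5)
Your proof is correct and follows essentially the same route as the paper: restrict to the two-map sub-system $\mathcal{F}'=\set{f_{i_1},f_{i_2}}$, apply \Cref{prop:calculation} to get a lower bound $s$ solving $2b^s=1$, and conclude via monotonicity of Hausdorff dimension. The only difference is cosmetic — you verify the hypotheses and compute $s=\log 2/\log(1/b)$ explicitly, whereas the paper simply observes that $s\neq 0$ since $b^0+b^0=2\neq 1$.
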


\begin{proof}
Let us consider the sub-system $\mathcal{F}' = \set{f_{i_1}, f_{i_2}} \subset \mathcal{F}$. We can then apply \Cref{prop:calculation} and \ref{hausdorffprop:monotonicity} on the attractor set $\Delta'$ and note that $s\leq\hausdorff{\Delta'} \leq \hausdorff{\Delta}$, where $s$ is the non-negative number that satisfies $b^s + b^s = 1$. This number has to be greater than $0$, since $b^0+b^0=2\neq1$. Therefore, we have $0 < s \leq \hausdorff{\Delta'} \leq \hausdorff{\Delta}$.
\end{proof}

\subsection{Conformal Iterated Function Systems (CIFS)}
\label{sec:cifs}

The Hausdorff dimension of the attractor set of an IFS cannot be estimated as quite as simple when the index set $I$ is infinite. Nevertheless, if an (infinite) IFS satisfies the so-called \emph{conformal conditions}, we have some results that allow us to consider approximations by finite IFS.

An IFS $\cal{F}$ is said to be a \emph{conformal IFS}, or just a \emph{CIFS}, if it satisfies the following conditions:

\begin{enumerate}[label= (C\arabic*), ref = (C\arabic*)]
    \item \label{conf:injection}
    For each $i\in I$, the function $f_i$ is an injection of $K$ into itself.\\
    
    \item \label{conf:unifcontract}
    The system is uniformly contractive on $K$, that is, there exists some $s<1$ such that
    $$\abs{f_i(x) - f_i(y)} \leq s\abs{x - y}, \quad \text{for all }i\in I \text{ and for all } x,y\in K.$$
    
    \item \label{conf:osc}
    (Open Set Condition) The set $K$ is connected, and each function satisfies $f_i(\opname{Int}(K))\subset \opname{Int}(K)$ and $f_i(\opname{Int}(K))\cap f_j(\opname{Int}(K))=\emptyset$, for all $i,j\in I, i\neq j$.\\
    
    \item \label{conf:extension}
    There is an open set $V\subset\R^d$, with $K\subset V$, such that each $f_i$ extends to $f_i^V$, a $\cal{C}^{1,\varepsilon}$ diffeomorphism on $V$, and the extensions are conformal functions, that is, the derivatives $Df_i^V(x)$ satisfy $Df_i^V(x) = \kappa_{x,i}\opname{Isom}_{x,i}$, where $\kappa_{x,i}\in\R$ and $\opname{Isom}_{x,i}:\R^d\to\R^d$ is an isometry, for any $x\in V$ and $i\in I$ (for more details on conformal functions, see \cite[Chapter 7]{pesin_dimension}).\\

	\item\label{conf:cone_alt}
	The following inequality holds: $$\inf_{x\in\partial K}\inf_{0<r<1}\dfrac{\opname{m}_d(B_r(x)\cap\opname{Int}(K))}{\opname{m}_d(B_r(x))} >0,$$ where $\opname{m}_d$ is the $d$-dimensional Lebesgue measure.\\

    \item\label{conf:bdp_alt}
    There are constants $L \geq 1$ and $\alpha > 0$ such that, for every $i\in I$,
    $$\Bigabs{\norm{Df_i^V(x)}_{\opname{op}} - \norm{Df_i^V(y)}_{\opname{op}}} \leq L \cdot \norm{(Df_i^V)^{-1}}_{\opname{unif}}^{-1} \cdot \abs{x-y}^{\alpha},$$
    where $f_i^V$, $i\in I$, are the extensions established in \ref{conf:extension}, $x,y\in V$, $\norm{\cdot}_{\opname{op}}$ is the operator norm, and $\norm{(Df_i^V)^{-1}}_{\opname{unif}} := \sup_{z\in V}\set{\norm{(Df_i^V(z))^{-1}}_{\opname{op}}}$.\end{enumerate}

It is important to note that \ref{conf:cone_alt} and \ref{conf:bdp_alt} are alternative formulations to those traditionally stated, such as in \cite{mauldin}. These alternative conditions are more suitable for our purposes and are discussed, for instance, in \cite[Theorem 3.2]{mauldin}, \cite[Lemma 2.2]{mu}, and \cite{gmw}.

The following proposition establishes a relationship between the Hausdorff dimension of the attractor set of a CIFS and its finite approximations.

\begin{proposition}[{\cite[Theorem 3.15]{mu}}]
\label{prop:sup}
Let $\mathcal{F} = \set{f_i}_{i\in I}$ be a CIFS with attractor set $\Delta$. Then, the equality $$\hausdorff{\Delta} = \sup_{J\in\opname{Fin}(I)}\set{\hausdorff{\Delta_J} : \Delta_J \text{ is the attractor set of the sub-system }\mathcal{F}_J=\set{f_j}_{j\in J}}$$ holds, where $\opname{Fin}(I) \subset 2^I$ is the collection of finite subsets of $I$.
\end{proposition}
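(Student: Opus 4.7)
The plan is to prove the two opposite inequalities separately. The easy direction $\sup_{J\in\operatorname{Fin}(I)}\hausdorff{\Delta_J}\leq\hausdorff{\Delta}$ is immediate: for any finite $J\subset I$ the inclusion $\Delta_J\subseteq\Delta$ (remarked immediately after the definition of the invariant set in \Cref{sec:cifs}) combined with \ref{hausdorffprop:monotonicity} gives $\hausdorff{\Delta_J}\leq\hausdorff{\Delta}$, so the substance of the proposition is the reverse inequality.

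For that reverse inequality, the strategy is to invoke a pressure-based characterization of the Hausdorff dimension. For $t\geq 0$ set
$$\psi_n^I(t):=\sum_{\omega\in I^n}\|f'_\omega\|_\infty^t,\qquad P_I(t):=\lim_{n\to\infty}\frac{1}{n}\log\psi_n^I(t),$$
with analogous $\psi_n^J,\,P_J$ for each sub-system; existence of the limit follows from the submultiplicativity $\psi_{n+m}^I(t)\leq\psi_n^I(t)\psi_m^I(t)$, which in turn is a consequence of the chain rule and the uniform contraction \ref{conf:unifcontract}. Put $\theta_I:=\inf\{t\geq 0:P_I(t)\leq 0\}$ and likewise $\theta_J$. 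The key input -- the deep part of \cite{mu} and the reason conditions \ref{conf:extension}, \ref{conf:cone_alt} and \ref{conf:bdp_alt} are imposed -- is the \emph{Bowen formula} $\hausdorff{\Delta}=\theta_I$ (and $\hausdorff{\Delta_J}=\theta_J$ for every sub-system). The upper bound $\hausdorff{\Delta}\leq\theta_I$ comes from covering $\Delta$ by the cylinders $f_\omega(X)$, whose diameters are comparable to $\|f'_\omega\|_\infty$ by bounded distortion; the lower bound comes from the Mass Distribution Principle applied to a $\theta_I$-conformal measure on $\Delta$, whose pointwise density is controlled by \ref{conf:cone_alt}.

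Granted the Bowen formula, the proposition reduces to the identity $\theta_I=\sup_{J\in\operatorname{Fin}(I)}\theta_J$. One inequality is trivial. For the other, fix any $t<\theta_I$, so that $P_I(t)>0$; bounded distortion also yields quasi-multiplicativity of $\psi_n^I(t)$, which forces $\psi_N^I(t)>1$ for some large $N$. Since this is a sum over the countable set $I^N$, there exists a finite $F\subset I^N$ with $\sum_{\omega\in F}\|f'_\omega\|_\infty^t>1$; letting $J$ be the finite set of letters appearing in the words of $F$, we obtain $\psi_N^J(t)>1$, hence $P_J(t)>0$ and $\theta_J>t$. Therefore $\sup_{J'\in\operatorname{Fin}(I)}\theta_{J'}>t$ for every $t<\theta_I$, which gives $\sup_{J'}\theta_{J'}\geq\theta_I$.

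The main obstacle is unquestionably the Bowen formula itself: the upper bound is reasonably direct once bounded distortion is in hand, but the lower bound requires constructing a $\theta_I$-conformal measure for an \emph{infinite} CIFS, which is precisely the substance of the Mauldin--Urbanski theory and demands delicate distortion estimates as well as careful tightness control on cylinder masses. Because the present paper only cites \Cref{prop:sup} as a quoted tool from \cite{mu}, the conformal-measure construction would not be reproduced; once the Bowen formula is granted, the monotone-truncation step of the previous paragraph is short.
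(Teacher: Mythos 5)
The paper offers no proof of \Cref{prop:sup} at all: it is quoted as an external tool from \cite{mu} (Theorem 3.15), so there is no internal argument to compare yours against. Judged on its own, your sketch follows the standard Mauldin--Urba\'nski route (pressure function, Bowen-type formula, exhaustion by finite subsystems), and the easy half via $\Delta_J\subset\Delta$ and \ref{hausdorffprop:monotonicity} is exactly right. Two caveats. First, you lean on more of \cite{mu} than the statement needs, in a way that borders on circularity: the full Bowen formula $\hausdorff{\Delta}=\theta_I$ for the \emph{infinite} system --- whose lower bound is the conformal-measure construction you flag as the main obstacle --- is proved in \cite{mu} essentially together with the very identity you are asked to establish. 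For the proposition one only needs the covering upper bound $\hausdorff{\Delta}\le\theta_I$ (cylinders $f_\omega(X)$ with diameters comparable to $\lVert f'_\omega\rVert_\infty$), the elementary identity $\theta_I=\sup_{J\in\operatorname{Fin}(I)}\theta_J$, and the classical finite-alphabet Bowen formula $\hausdorff{\Delta_J}=\theta_J$; the trivial monotonicity direction then closes the sandwich, and no conformal measure for the infinite system is ever required.

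Second, there is a concrete quantitative slip in your truncation step: from $\psi_N^J(t)>1$ you cannot conclude $P_J(t)>0$. Submultiplicativity gives $P_J(t)=\inf_n\tfrac1n\log\psi_n^J(t)\le\tfrac1N\log\psi_N^J(t)$, which is the wrong direction, and the supermultiplicativity furnished by bounded distortion only yields $\psi_{kN}^J(t)\ge K^{-t(k-1)}\bigl(\psi_N^J(t)\bigr)^{k}$, hence $P_J(t)\ge\tfrac1N\bigl(\log\psi_N^J(t)-t\log K\bigr)$, where $K$ is the distortion constant. So you need the finite subsum to exceed $K^t$, not merely $1$. The repair is easy: since the limit defining $P_I(t)$ equals the infimum, $\psi_n^I(t)\ge e^{nP_I(t)}\to\infty$ when $P_I(t)>0$, so choose $N$ with $\psi_N^I(t)>K^t$ and then a finite set $F\subset I^N$ whose subsum exceeds $K^t$; with that adjustment (and no appeal to quasi-multiplicativity for the full system, which you invoke where it is not needed) the finite-approximation step is correct.
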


For an analysis of the Hausdorff dimension of attractor sets and measures of CIFS, see also \cite{mihailescu}.

Now we present another proposition concerning CIFS, which states that in the definition of the attractor set \eqref{atractset}, the union and intersection can be, in a sense, interchanged.

\begin{proposition}[{\cite[Theorem 3.1]{mauldin}}]
\label{proposition:switch_union_intersection}
Let $\mathcal{F} = \set{f_i : i\in I}$ be a CIFS. Then, the following relationship holds:
$$\Delta = \bigcup_{\omega\in I^{\N}} \bigcap_{k\geq1}f_{\omega|_k}(K) = \bigcap_{k\geq1}\bigcup_{\eta\in I^k}f_{\eta}(K).$$
\end{proposition}

\section{Proof of the main result}
\label{sec:proof}
The local dynamics of a sliding Shilnikov connection is captured by the first return map $\pi$, defined on a subset of $\partial M^{s,e}$ within a small neighborhood $U \subset \partial M^{s,e}$ of the visible fold-regular point $q$. Thus, our initial task is to construct this map. Subsequently, based on its construction, we will relate this map to  the theory of CIFS by showing that it can be branched into countably many smooth maps such that the functions corresponding to their inverses constitute a CIFS.

For clarity, we assume the hypotheses of \Cref{theorem} in this section. Without loss of generality, we will focus exclusively on the sliding case, where $p \in M^s$ and $q \in \partial M^{s}$, as depicted in \Cref{fig:shilnikov_connection}.

\subsection{Construction of the First Return Map}
\label{subsec:firstreturn}

Let us consider a sliding Shilnikov connection $\Gamma$ (see Definition \ref{definition:shilnikov_connection}) with a hyperbolic pseudo-saddle-focus $p \in M^s$ and passing through a visible fold-regular point $q\in\partial M^s$, which is a visible fold point of $X$. Recall that, for $u\in V\subset\R^3$ and $w\in\cl{M^s}$, the flows of $X$ and $\tilde{Z}$ are denoted, respectively, by $\varphi^{t}_{X}(u)$ and $\varphi^{t}_{\tilde{Z}}(w)$.

First, for $ r > 0 $, let us define the set
$$\gamma_r := \overline{B_r(q) \cap \partial M^s}.$$ 
\ref{remark:a2} from Remark \ref{fold-regular-properties} implies that, for sufficiently small $ r > 0 $, $\gamma_r$ is a smooth curve of visible fold-regular points of $ Z $, which are also visible fold points for $ X $. In addition, \ref{remark:a1} from Remark \ref{fold-regular-properties} states that $\gamma_r$ is a transversal section of the sliding vector field $\tilde{Z}$.

Second, from Definition \ref{definition:shilnikov_connection}, there exists $t_q > 0$ such that $\varphi^{t_q}_{X}(q)=p$, in particular, $g(\varphi^{t_q}_{X}(q))=0,$ and $g(\varphi^{t}_X(q)) \neq 0$, for any $t\in(0,t_q)$. Since
\[
\dfrac{d}{dt}g(\varphi^{t}_{X}(q))\big|_{t=t_q}=Xg(p)\neq0,
\]
the Implicit Function Theorem implies the existence of $r>0$, and a function $t_X(w)$, defined in $\gamma_r$, satisfying
$$t_X(q)=t_q,\,\, g\big(\varphi^{t_X(w)}_X(w)\big) = 0, \text{ and } g\left(\varphi^{t}_X(w)\right) \neq 0\text{ for all }t\in(0,t_X(w)).$$
Thus, define
$$\mu_r := \set{\varphi^{t_X(w)}_{X}(w) : w\in\gamma_r}.$$
Notice that the flow of the vector field $X$ maps $\gamma_r$ onto $\mu_r$ by means of the following diffeomorphism
\begin{equation}\label{difeotheta}\begin{aligned}
\theta_X:\gamma_r&\to \mu_r\\
                w&\mapsto\theta_X(w) := \varphi^{t_X(w)}_{X}(w),
\end{aligned}\end{equation}
implying that $\mu_r$ is a smooth curve containing $p$. Furthermore, since $p \in \mu_r \subset M^s$ is a hyperbolic focus of $\tilde{Z}$, we can see that $\mu_r \setminus \set{p}$ is transversal to $\tilde{Z}$. Indeed, if this was not the case, the vector $0\neq v \in T_p M$ tangent to $\mu_r$ at $p$ would be a real eigenvector of $D\tilde{Z}(p)$ restricted to $T_p M$. However, this is impossible because the restriction of $D\tilde{Z}(p)$ to $T_p M$ has a pair of complex conjugate eigenvalues.

Third, consider the backward saturation, $S_r,$ of $\gamma_r$ induced by $\varphi_{\tilde{Z}}$, that is, 
$$S_r := \bigcup_{t\geq0}\varphi^{-t}_{\tilde{Z}}(\gamma_r).$$ 
From Definition \ref{definition:shilnikov_connection}, we have $\lim_{t \to -\infty} \varphi^t_{\tilde{Z}}(q) = p$. Therefore, taking into account that $\gamma_r$ is a transversal section of $\tilde Z$ and that $p\in\mu_r\subset M^s$ is a hyperbolic focus of $\tilde{Z}$, the Implicit Function Theorem can be used to ensure that, for sufficiently small $r > 0$, $\varphi^t_{\tilde{Z}}(w)$ converges to $p$ as $t \to -\infty$ for all $w \in \gamma_r$.
Now, since $\mu_r\setminus\{p\}$ is transversal to $\tilde Z$ and $p\in\mu_r\subset M^s$ is a hyperbolic focus of $\tilde{Z}$, we deduce that
$$S_r\cap\mu_r = \bigcup_{i\geq0}\left(L_i^{\mu_r}\cup R_i^{\mu_r}\right),$$
where the collections $\set{L_i^{\mu_r}}_{i\geq0},\set{R_i^{\mu_r}}_{i\geq0}\subset\mu_r$, of the connected components of $S_r\cap\mu_r$, are located in opposite sides of $p\in\mu_r$, $L_i^{\mu_r}\cap L_j^{\mu_r} = \emptyset = R_i^{\mu_r}\cap R_j^{\mu_r}$, if $i\neq j$, and $L_i^{\mu_r},R_i^{\mu_r}$ converge to $\set{p}$ in the Hausdorff distance as $i$ increases (see Figure \ref{fig:intervals}). In addition, for each $\zeta\in S_r\cap\mu_r $, there exists $t_{\tilde{Z}}(\zeta)>0$ satisfying $\varphi^{t_{\tilde{Z}}(\zeta)}_{\tilde{Z}}(\zeta) \in \gamma_r$ and $\varphi^{t}_{\tilde{Z}}(\zeta) \in M^s$, for all $t\in(0,t_{\tilde{Z}}(\zeta))$. This induces the following maps from each  $J\in\{L_i^{\mu_r}\}_{i\geq 0}\cup\{R_i^{\mu_r}\}_{i\geq 0}$  into  $\gamma_r$,
\begin{equation}\label{rhoZJ}
\begin{aligned}
\theta_{\tilde Z}^{J}:J&\to \gamma_r\\
        \zeta&\mapsto\pi(w) := \varphi^{t_{\tilde{Z}}(\zeta)}_{\tilde{Z}}(\zeta).\end{aligned}
\end{equation}
For $J\in\{L_i^{\mu_r}\}_{i\geq 1}\cup\{R_i^{\mu_r}\}_{i\geq 1}$, such maps are diffeomorphisms. However, for  $J\in\{L_0^{\mu_r},R_0^{\mu_r}\}$, the induced map $\theta_{\tilde Z}^{J}$ may not be surjective (see Figure \ref{fig:intervals}).

\begin{figure}[h!]
\centering
\begin{overpic}[width=\linewidth]{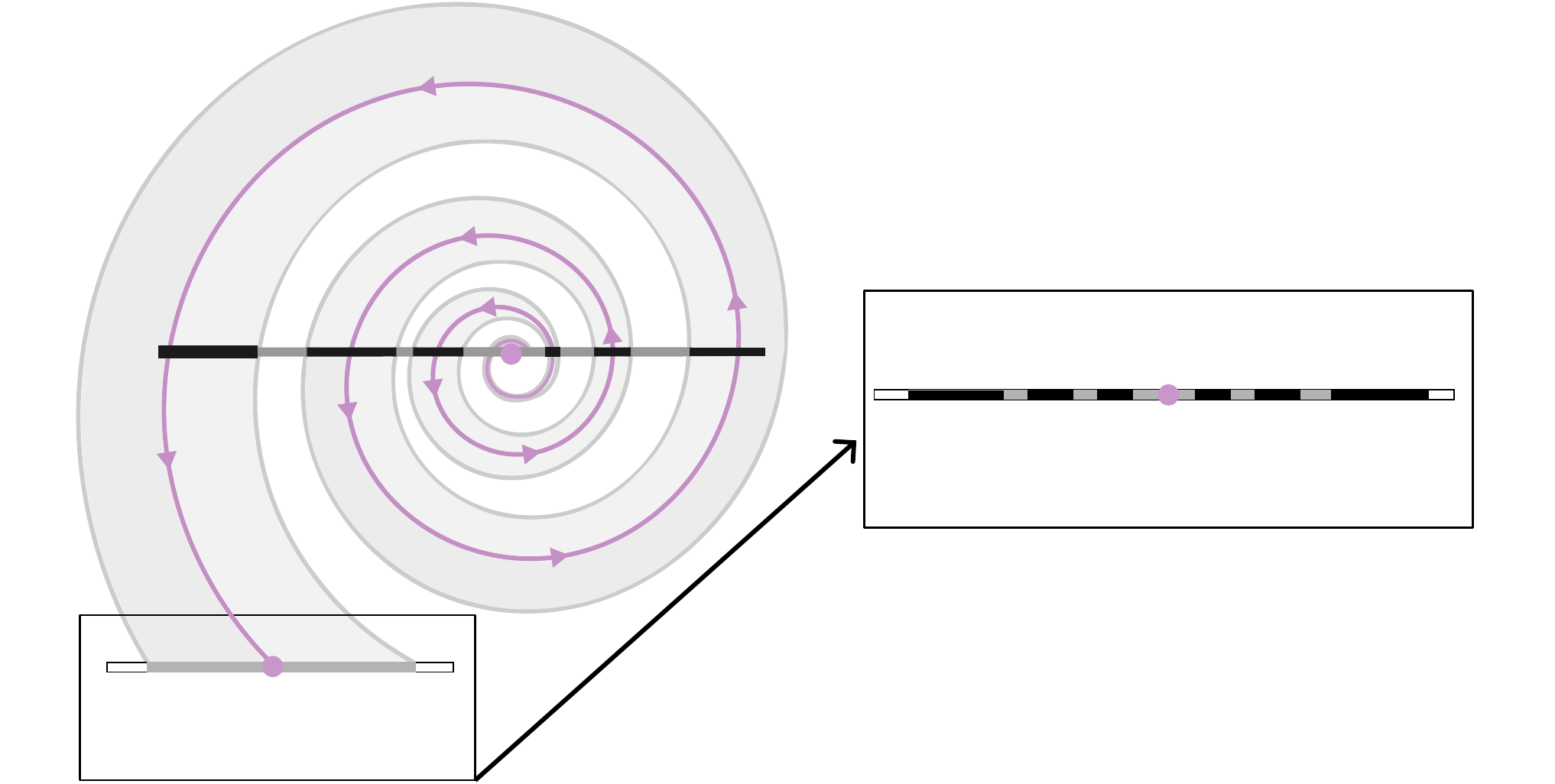}
%\begin{overpic}[grid,tics=2,width=\linewidth]{intervals.pdf}
\put(18,5.5){$q$}
\put(9.45,5){$\underbrace{\quad\quad\quad\quad\quad\quad\quad\quad}_{%
\let\scriptstyle\textstyle\substack{\gamma_r}}$}
\put(33.5,26){$p$}
\put(8,14){$S_r$}
\put(48,26){$\mu_r$}
\put(13,29){$L_0^{\mu_r}$}
\put(20.2,29){$L_1^{\mu_r}$}
\put(26,29){$L_2^{\mu_r}$}
\put(31.8,29){{\small $\cdots$}}
\put(35,29){$R_2^{\mu_r}$}
\put(39.7,29){$R_1^{\mu_r}$}
\put(47.7,29){$R_0^{\mu_r}$}
\put(60,26.5){$L_0^{\gamma_r}$}
\put(66,26.5){$L_1^{\gamma_r}$}
\put(71,26.5){$L_2^{\gamma_r}$}
\put(74,26.5){{\footnotesize $\cdots$}}
\put(77,26.5){$R_2^{\gamma_r}$}
\put(81,26.5){$R_1^{\gamma_r}$}
\put(88,26.5){$R_0^{\gamma_r}$}
\put(58.6,22.2){$\underbrace{\quad\quad\quad\quad\quad\quad\quad\quad\quad\quad\quad\quad\quad\quad\quad}_{
\let\scriptstyle\textstyle\substack{\gamma_r}}$}
\put(75.5,23){$q$}
\end{overpic}
\caption{On the left, we have a representation of the sliding dynamics, illustrating the intersection between the backward saturation $ S_r $ and $ \mu_r $, which generates the collections $ \{L^{\mu_r}_i\}_{i \geq 0} $ and $ \{R^{\mu_r}_i\}_{i \geq 0} $. On the right, a zoomed-in view of $ \gamma_r $ shows the collections $ \{L^{\gamma_r}_i\}_{i \geq 0} $ and $ \{R^{\gamma_r}_i\}_{i \geq 0} $.}
\label{fig:intervals}
\end{figure}

Finally, for each $i \geq 0$, we define
\[ L_i^{\gamma_r} := \theta_X^{-1}(L_i^{\mu_r}) \subset \gamma_r \quad \text{and} \quad R_i^{\gamma_r} := \theta_X^{-1}(R_i^{\mu_r}) \subset \gamma_r, \]
where $\theta_X$ is the diffeomorphism given by \eqref{difeotheta}. This ensures that the properties $L_i^{\gamma_r} \cap L_j^{\gamma_r} = \emptyset$ and $R_i^{\gamma_r} \cap R_j^{\gamma_r} = \emptyset$ hold for $i \neq j$, and that $L_i^{\gamma_r},R_i^{\gamma_r}$ converge to $\{q\}$ in the Hausdorff distance as $i$ increases. Thus, we define
\[ \mathcal{W}_r := \bigcup_{i \geq 0} \left( L_i^{\gamma_r} \cup R_i^{\gamma_r} \right) \subset \gamma_r. \]

Therefore, for $r>0$ sufficiently small, the first return map
\begin{equation}\label{firstreturn}
\begin{aligned}
\pi:\cal{W}_r&\to \gamma_r\\
        w&\mapsto\pi(w) := \varphi^{t_{\tilde{Z}}\left(\theta_X(w)\right)}_{\tilde{Z}}\left(\theta_X(w)\right)
\end{aligned}
\end{equation}
 is well-defined.

\subsection{The First Return Map as a CIFS}
\label{subsec:cifs}

We will now demonstrate that $\pi$ can be branched into countably many smooth maps exhibiting expanding behavior, such that the functions corresponding to their inverses are contractions (see Figure \ref{fig:cifs}) and constitute a CIFS.

Consider the index set $\mathcal{J} = \mathcal{J}_r := \{L_i^{\gamma_r}\}_{i \geq 0} \cup \{R_i^{\gamma_r}\}_{i \geq 0}$. Note that the function $\pi$ maps each $J \in \mathcal{J}$ onto $\gamma_r$, with the possible exceptions of $L_0^{\gamma_r}$ and $R_0^{\gamma_r}$, where $\pi$ may not be surjective. Again, this is not an issue, as we are interested in the local behavior of the map $\pi$. Specifically, we will focus on the restriction of $\pi$ to $U \cap \mathcal{W}_r$, where $U \subset \gamma_r$ is a neighborhood of $q$ satisfying the following condition:
\begin{enumerate}[label= (\Alph*), ref = (\Alph*), start=16]
\item\label{condition:p}
If $J \in \mathcal{J}$ and $J \cap U \neq \emptyset$, then $J \subset U$ and $\pi(J) = \gamma_r$.
\end{enumerate}
For a given neighborhood $U \subset \gamma_r$  of $q$ satisfying \ref{condition:p}, we denote $\cal{J}_U=\{J\in \cal{J}:\, J\subset U\}.$

For each $J\in \cal{J}_U$, the restriction $\pi_{J}:=\pi|_J: J\to \gamma_r$ is a diffeomorphism, thus its inverse $\psi_{J}:=\pi_{J}^{-1} : \gamma_r\to J$ is well-defined. In what follows we will investigate the behavior of $\pi_{J}$ and $\psi_{J}$.

We start by analyzing more deeply the sliding dynamics close to the unstable hyperbolic focus of the sliding vector field $\tilde{Z}$. Since $\mu_R \to \set{p}$ as $R \to 0$, $p$ is an unstable hyperbolic focus of $\tilde Z$, and $\mu_R \setminus \set{p}$ is transverse to $\tilde{Z}$, we can take $R_0 > 0$ such that for any $R \in (0, R_0)$, there exists $\overline{R} > 0$ for which a $\mathcal{C}^{1,1}$ first return map of $\tilde{Z}$ on $\mu_R$, denoted by $\rho: \mu_R \to \mu_{\overline{R}}$, is well-defined. The map $\rho$ is given by $\rho(\zeta) := \varphi^{t_{\rho}(\zeta)}_{\tilde{Z}}(\zeta)$, where $t_{\rho}(\zeta)$ is the time taken for the trajectory of $\tilde{Z}$, starting at $\zeta$, to turn around the focus $p$ and return to $\mu_{\overline{R}}$ on the same side as $\zeta$ relative to $p$. As pointed out by \cite[Proposition 2]{npv}, since $p$ is a hyperbolic unstable fixed point of $\rho$, we can apply the Hartman-Grobman Theorem to get a local $\cal{C}^{1,\beta}$-linearization of $\rho$, for some $\beta>0$, in some neighborhood $\cal{O}\subset\mu_{R_0}$ (see \cite[Theorem 1]{c1linearization} and, also, \cite{hartman,hasselblatt}). That is, there exists a diffeomorphism $H:\cal{O}\to H(\cal{O})$ of class $\cal{C}^{1,\beta}$ with $H(p) = 0$, such that $\rho(\zeta) = H^{-1}(\lambda H(\zeta))$, with $\lambda > 1,$ for every $\zeta\in\mu_R\cap\cal{O}$. Notice that, if $\rho^{k-1}(\zeta)\in \cal{O}$, then $\rho^k(\zeta) = H^{-1}(\lambda^k H(\zeta))$.               
               
Now, in the definition of the first return map \eqref{firstreturn}, take $ r \in (0, R_0) $ such that $ \mu_r \subset \cal{O} $. 
Denote by $\rho_X$ the restriction of $\theta_X$, given by \eqref{difeotheta}, to $\cal{W}_r$, which is a diffeomorphism onto its image $S_r\cap\mu_r$. In addition, given $ w \in \cal{W}_r $, since $ \rho_X(w) \in S_r \cap \mu_r \subset \cal{O} $, we can define an integer $ c(w) $ corresponding to the number of times that $\rho$ must be applied to $ \rho_X(w) $ before it enters $ L_1^{\mu_r}\cup R_1^{\mu_r} $, that is, $$c(w) = i-1\text{ for }w\in L_i^{\gamma_r}\cup R_i^{\gamma_r}.$$
Note that $c(w) = -1$ if $w \in L_0^{\gamma_r}\cup R_0^{\gamma_r}$. However, this is not an issue, as we will consider $w$ in a small neighborhood of $q$. Since the function $c$ is constant in each $J\in \cal{J}$, we denote $c_J:=c(w)$, for $w\in J$, that is, 
	\begin{equation}\label{equation:cJ}
	c_J =  i-1, \text{ if } J\in\{ L_i^{\gamma_r}, R_i^{\gamma_r}\},\quad i\geq0.
 	\end{equation}
	Finally, define the auxiliary map
\[
\rho_{\tilde{Z}}^J:=\begin{cases}
\theta_{\tilde Z}^{L_1^{\mu_r}}(\zeta)&\text{if } J\in\{L_i^{\gamma_r}\}_{i\geq0}, \vspace{0.15cm}\\
\theta_{\tilde Z}^{R_1^{\mu_r}}(\zeta)&\text{if } J\in\{R_i^{\gamma_r}\}_{i\geq0}. \\
\end{cases}
\]	
Notice that, for $J \in \{L_i^{\gamma_r}\}_{i \geq 0}$, we have $\rho_{\tilde{Z}}^J |_{L_1^{\mu_r}} = \theta_{\tilde{Z}}^{L_1^{\mu_r}}$, and for $J \in \{R_i^{\gamma_r}\}_{i \geq 0}$, we have $\rho_{\tilde{Z}}^J |_{R_1^{\mu_r}} = \theta_{\tilde{Z}}^{R_1^{\mu_r}}$. These maps are diffeomorphisms, respectively, from $L_1^{\mu_r}$ and $R_1^{\mu_r}$ onto $\gamma_r$, induced by $\tilde{Z}$ as defined in \eqref{rhoZJ}. It is important to mention that ensuring the diffeomorphic nature of these maps is the reason we chose $L_1^{\mu_r} \cup R_1^{\mu_r}$ to define $c(w)$, rather than $L_0^{\mu_r} \cup R_0^{\mu_r}$.

With the notation introduced above, for $J\in\cal{J}_U$, the restricted first return map $\pi_J$ and its inverse $\psi_J$ are given, respectively, by 
    \begin{equation}\label{equation:psi_j}
    \begin{aligned}
    \pi_{J}(w):=& \rho_{\tilde{Z}}^J\circ\rho^{c_J}\circ\rho_X(w) = \rho^J_{\tilde{Z}}\circ H^{-1}(\lambda^{c_J} H\circ\rho_X (w)),\\
    	\psi_{J}(x):=& \rho^{-1}_X\circ\rho^{-c_J}\circ(\rho_{\tilde{Z}}^J)^{-1}(x) = \rho^{-1}_X\circ H^{-1}\big(\lambda^{-c_J} H\circ(\rho_{\tilde{Z}}^J)^{-1}(x)\big).
\end{aligned}
    	\end{equation}

\begin{proposition}\label{remark:uniformcontractive}
There exists a neighborhood $U\subset\gamma_r$ of $q$ satisfying \ref{condition:p} for which 
$$0 < \epsilon_{J } \leq \abs{\psi'_{J }(x)} \leq s < 1,$$ for every $J\in\cal{J}_U$ and $x \in \gamma_r$. Consequently, $$\abs{\pi'(w)} \geq \dfrac{1}{s}>1,$$ for every $w\in U\cap \cal{W}_r$.\end{proposition}
\begin{proof}
Let $J\in\cal{J}_U$. First, since $\psi_J$ is a diffeomorphism on a compact set $\gamma_r$, then
$$\epsilon_J:=\inf_{x\in\gamma_r}\psi_J'(x)>0.$$

Now, since $\rho_X$, $\rho_{\tilde{Z}}^J$, and $H|_{\mu_r}$ are diffeomorphisms on compact sets, we can set
\begin{equation}\label{equation:A}
A := \left(\min_{\zeta \in J}\abs{(\rho_{\tilde{Z}}^J)'(\zeta)}\right) \cdot \left(\min_{z\in H(\mu_r)}\abs{(H^{-1})'(z)}\right) \cdot \left(\min_{\zeta\in\mu_r}\abs{H'(\zeta)} \right) \cdot \left(\min_{x \in \gamma_r}\abs{\rho_X'(x)}\right) > 0.
\end{equation}
Thus, taking  the expression \eqref{equation:psi_j} into account, we get
\begin{equation}\label{s}
s:=\abs{\sup_{x\in\gamma_r}\psi'_{J }(x)} \leq  \dfrac{1}{A\,\lambda^{c_J}}.
\end{equation}
Hence, we can take a neighborhood $U\subset\gamma_r$  of $q$  satisfying \ref{condition:p} for which $c_J>-\log_{\lambda} A$ for every $J\in\cal{J}_U$. This implies that $s<1$.

Finally, since
\[
\pi_J'(w)=\dfrac{1}{\psi_J'(\pi_J(w))},
\]
we conclude that $\pi_J'(w)\geq s$ for every $w\in J$ and $J\in\cal{J}_U$. Therefore, $\abs{\pi'(w)} \geq \dfrac{1}{s}>1$, for every $w\in U\cap \cal{W}_r$.
\end{proof}

    \begin{remark} \label{remark:attractor}
 The combination of Proposition \ref{remark:uniformcontractive} and the compactness of each set $J \in \cal{J}$ ensures that the collection of functions
\begin{equation*}\label{PsiU}
\Psi_U := \set{\psi_{J} : J \in \cal{J}_U}
\end{equation*}
forms a countable IFS, thereby admitting an attractor set.
    \end{remark}

Before going further, it is important at this point to clarify a subtle detail. Formally, the functions of the IFS $\Psi$ are defined on the curve $\gamma_r$, but working with them in this context can be cumbersome. Instead, we can consider an orientation-preserving diffeomorphism $h: \gamma_r \to [-1,1]$ such that $h(q) = 0$. By defining $\tilde{\psi}_{J} := h \circ \psi_{J} \circ h^{-1}: [-1,1] \to h(J)$, we form the set $\tilde{\Psi} := \big\{\tilde{\psi}_{J} : J \in \cal{J}, J \subset U\big\}$, which is an IFS. Additionally, we have $h(L_i^{\gamma_r}) \subset [-1,0)$ and $h(R_i^{\gamma_r}) \subset (0,1]$. This provides a nice graphical representation of the first return map $\pi$ in Figure \ref{fig:cifs}. Since \ref{hausdorffprop:invariance} implies that the Hausdorff dimension is invariant under bi-Lipschitz maps, which is the case for $h$, this transformation does not change any of the properties we are investigating. Therefore, from now on, we will assume, by an abuse of notation, that the functions in $\Psi$ are defined in the interval $[-1,1]$ and that the sets $J\in\cal{J}$ are subintervals of $[-1,1]$.

\begin{figure}[h!]
\centering
\vspace{0.2cm}
\begin{overpic}[width=\linewidth]{figure4_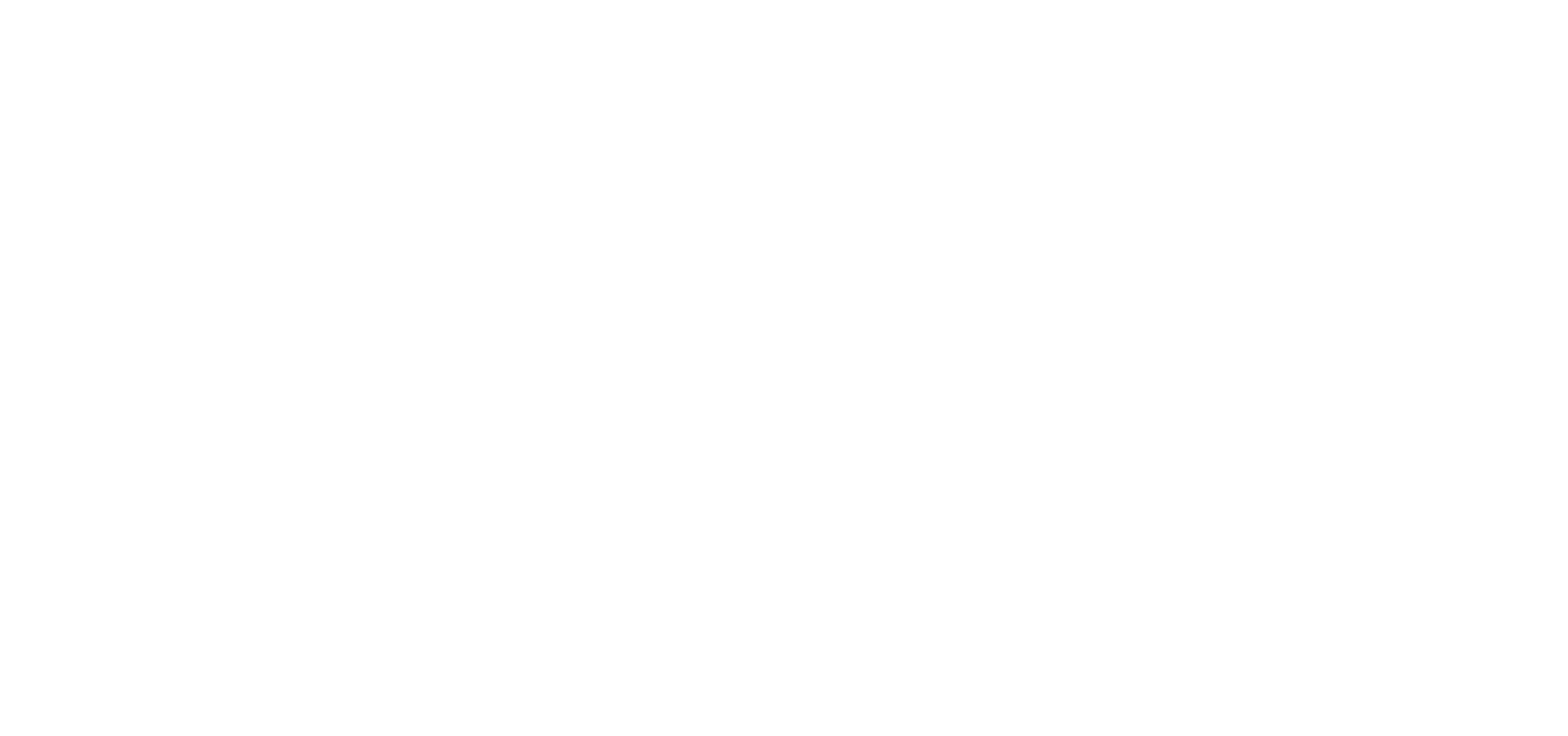}
%\begin{overpic}[grid,tics=2,width=\linewidth]{cifs.pdf}
\put(24,19){\footnotesize $0$}
\put(0,20.2){\footnotesize $-1$}
\put(45,20.2){\footnotesize$1$}
\put(22.5,0){\footnotesize$-1$}
\put(23.8,40.3){\footnotesize $1$}
\put(22.5,42.7){$\gamma_r$}
\put(6,22){$L_1^{\mu_r}$}
\put(13,22){$L_2^{\mu_r}$}
\put(17.5,22){$L_3^{\mu_r}$}
\put(22,22){\footnotesize $\cdots$}
\put(25.5,22){$R_3^{\mu_r}$}
\put(31,22){$R_2^{\mu_r}$}
\put(37,22){$R_1^{\mu_r}$}
\put(6,41){$\pi_{L_1}$}
\put(13,41){$\pi_{L_2}$}
\put(17.5,41){$\pi_{L_3}$}
\put(25.5,41){$\pi_{R_3}$}
\put(31,41){$\pi_{R_2}$}
\put(37,41){$\pi_{R_1}$}

\put(74.5,19){\footnotesize $0$}
\put(53.8,19){\footnotesize $-1$}
\put(95.8,19){\footnotesize $1$}
\put(76,0){\footnotesize $-1$}
\put(77,41){\footnotesize $1$}
\put(98,20.5){$\gamma_r$}
\put(77,3.8){$L_1^{\gamma_r}$}
\put(77,12){$L_2^{\gamma_r}$}
\put(77,15.8){$L_3^{\gamma_r}$}
\put(77,24){$R_3^{\gamma_r}$}
\put(77,28){$R_2^{\gamma_r}$}
\put(77,35){$R_1^{\gamma_r}$}
\put(78,19.8){{\footnotesize $\vdots$}}
\put(96.5,6.3){$\psi_{L_1}$}
\put(96.5,13){$\psi_{L_2}$}
\put(96.5,16.5){$\psi_{L_3}$}
\put(96.5,24.5){$\psi_{R_3}$}
\put(96.5,28.5){$\psi_{R_2}$}
\put(96.5,34){$\psi_{R_1}$}
\end{overpic}
%\vspace{0.2cm}
\caption{Representation of the branches of the first return map $\pi$, starting with $i=1$, and the IFS $\Psi$, respectively.}
\label{fig:cifs}
\end{figure}

In the following, we will prove that $\Psi_U$ is indeed a CIFS. This is important because it allows the use of Proposition \ref{proposition:switch_union_intersection} to investigate the attractor set of $\Psi_U$, defined in \eqref{atractset}.  For a related recent study on transformations that are countably piecewise differentiable and have countable CIFS in their fibers, as well as on the fiber dimension of their invariant sets, we refer to \cite{M23}.

\begin{proposition}
\label{prop:conformal}
There exists a neighborhood $U\subset\gamma_r$ of $q$ satisfying \ref{condition:p} for which IFS $\Psi_U$ is conformal.
\end{proposition}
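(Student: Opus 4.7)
The plan is to verify each of the six conformal conditions for $\Psi_U$, shrinking the neighborhood $U$ of $q$ as necessary. Since we identify $\gamma_r$ with $[-1,1]$ via $h$, the ambient dimension is $d=1$, which simplifies matters considerably. In \ref{conf:extension}, the derivative of any $\mathcal{C}^1$ map of an interval is a scalar, hence trivially a similarity; and \ref{conf:cone_alt} is satisfied with lower bound $1/2$ because $\partial[-1,1]=\{\pm 1\}$ and half of each small ball centred at an endpoint lies in the interior. The injectivity \ref{conf:injection} and open set condition \ref{conf:osc} follow directly from the construction: each $\phi_\eta=\pi|_\eta$ is a bijection onto $[-1,1]$, the subintervals $\eta\in J$ are pairwise disjoint, and $\psi_\eta=\phi_\eta^{-1}$ maps $[-1,1]$ injectively onto $\eta\subset\operatorname{int}[-1,1]$.

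For the uniform contractivity \ref{conf:unifcontract}, I would appeal to Remark \ref{remark:uniformcontractive}: whenever $\eta=L_i$ or $R_i$ one has $\sup_{z\in[-1,1]}|\psi'_\eta(z)|\leq 1/(\tilde\alpha\alpha_X|\lambda|^{2i+l_r})$ with $|\lambda|>1$. Since $l_r\to+\infty$ as $r\to 0$, shrinking $U$ produces some $s<1$ with $|\psi'_\eta|\leq s$ uniformly over every $\eta$ surviving in $\Psi_U$. For the regularity in \ref{conf:extension}, I would use the decomposition
\[
\psi_\eta \;=\; \rho_X^{-1}\circ H\circ M_{\lambda^{-k(\eta)}}\circ H^{-1}\circ \tilde\rho^{-1}, \qquad M_c(t):=ct,
\]
valid on the linearization chart of Subsection \ref{subsec:cifs}. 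Each of $\rho_X^{-1}$, $\tilde\rho^{-1}$, $H$, $H^{-1}$ is $\mathcal{C}^{1,\beta}$ on an open neighborhood of its compact domain (the outer maps by the $\mathcal{C}^{1,1}$-regularity of $X,\tilde Z$ together with transversality at the regular-fold; the inner two by the $\mathcal{C}^{1,\beta}$-linearization of $\rho$), so $\psi_\eta$ extends to a $\mathcal{C}^{1,\beta}$ diffeomorphism on a fixed open interval $V\supset[-1,1]$.

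The main difficulty is the bounded distortion property \ref{conf:bdp_alt}. Differentiating the decomposition above separates the $\eta$-dependence into a pure scaling and a uniformly regular remainder:
\[
|\psi'_\eta(z)| \;=\; |\lambda|^{-k(\eta)}\cdot G_\eta(z), \quad G_\eta(z):=|(\rho_X^{-1})'(u_1(z))|\cdot|H'(u_2(z))|\cdot|(H^{-1})'(u_3(z))|\cdot|(\tilde\rho^{-1})'(z)|,
\]
where the intermediate points $u_j(z)$ are produced by the composition. The linearization is essential: it converts the iterated dynamics into the pure scaling $M_{\lambda^{-k(\eta)}}$, which contributes to the magnitude of $|\psi'_\eta|$ but not to its distortion. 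As $k(\eta)\to\infty$, the points $u_j(z)$ remain in fixed compact subsets of open sets where $(\rho_X^{-1})', H', (H^{-1})', (\tilde\rho^{-1})'$ are continuous and nonvanishing, so $G_\eta$ is bounded above and below by constants independent of $\eta$; in particular $\inf_z|\psi'_\eta(z)|\asymp|\lambda|^{-k(\eta)}$ with implicit constants uniform in $\eta$. For the Hölder estimate, I would argue that $G_\eta$ is $\beta$-Hölder with constant independent of $\eta$: each of its four factors is $\mathcal{C}^{0,\beta}$ with a uniform Hölder seminorm on the relevant compactum, and each inner map $z\mapsto u_j(z)$ is Lipschitz with constant bounded uniformly in $\eta$ (being a composition of smooth diffeomorphisms on fixed compacta with $M_{\lambda^{-k(\eta)}}$, whose Lipschitz norm is at most $1$). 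Combining these ingredients produces
\[
\bigl||\psi'_\eta(x)|-|\psi'_\eta(y)|\bigr| \;\leq\; L\cdot\inf_z|\psi'_\eta(z)|\cdot|x-y|^\beta
\]
with $L$ independent of $\eta$, which is exactly \ref{conf:bdp_alt}. The subtle step is the uniformity of the Hölder constant across the infinite family of branches; this uniformity rests entirely on factoring out the pure scaling via the $\mathcal{C}^{1,\beta}$-linearization of $\rho$.
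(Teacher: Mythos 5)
Your proposal is correct and follows essentially the same route as the paper: each conformal condition is verified directly (injectivity and the open set condition from the branch structure, uniform contractivity from the derivative bounds with $l_r\to\infty$, the one-dimensional similarity and cone conditions being automatic), and the crucial bounded-distortion estimate \ref{conf:bdp_alt} is obtained exactly as in the paper by factoring $|\psi'_\eta|$ into the pure scaling $|\lambda|^{-k(\eta)}$ times a product of derivative factors of the fixed maps $\tilde\rho^{-1}$, $H$, $H^{-1}$, $\rho_X^{-1}$, which is uniformly bounded and H\"older, so that $\inf_z|\psi'_\eta(z)|$ absorbs the scaling. The only minor differences are that you get the $\mathcal{C}^{1,\varepsilon}$ extension in \ref{conf:extension} from the open-neighborhood regularity of the factor maps rather than the paper's affine gluing at the endpoints, and you are somewhat more explicit than the paper about the uniformity in $\eta$ of the H\"older constant; both variants are fine.
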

\begin{proof}
In what follows, we will examine the conformal conditions individually. Let $U\subset \gamma_r$ be the neighborhood of $q$ given by Proposition \ref{remark:uniformcontractive}. 

We start by the most immediate conditions. First, for  $J\in\cal{J}_U$, $\psi_J$ is a diffeomorphism, in particular, injective so that Condition \ref{conf:injection} holds. Also, Proposition \ref{remark:uniformcontractive} provides $\abs{\psi'_{J }(x)} \leq s < 1$, for every $J\in\cal{J}_U$ and for every $x \in \gamma_r$, which directly implies Condition \ref{conf:unifcontract}. Moreover, $\gamma_r$ is connected and the images of each function $\psi_J$, $J\in\cal{J}_U,$ are mutually disjoint and, therefore, Condition \ref{conf:osc} is satisfied.  Finally, Condition \ref{conf:cone_alt} holds because $K=\gamma_r$ is diffeomorphic to the closed interval $[-1,1]$. It remains to show that Conditions \ref{conf:extension} and \ref{conf:bdp_alt} hold.

Consider the set $\gamma_{r+\delta}$ for $\delta > 0$, and for each $J \in \cal{J}_U$, define the extension $\psi_J^\delta : \gamma_{r+\delta} \to \gamma_{r+\delta}$ of $\psi_J$ by joining affine functions with slopes equal to the lateral derivatives of $\psi_J$ at the endpoints of $\gamma_r$ on each side. This extension is a diffeomorphism, and since $\gamma_{r+\delta}$ is compact, the derivative $(\psi_J^\delta)'$ is Lipschitz continuous and, consequently, $\psi_J^\delta$ is also $\cal{C}^{1,\varepsilon}$. Furthermore, by Proposition \ref{remark:uniformcontractive}, $(\psi_J^\delta)'(x) \neq 0$ for every $x \in \gamma_r$, and therefore, for every $x \in \gamma_{r+\delta}$. Finally, since we are dealing with functions defined on a one-dimensional set, such extensions are trivially conformal, satisfying the requirements for Condition \ref{conf:extension} to hold.

At last, we will show that Condition \ref{conf:bdp_alt} holds. Notice that we have
$$\norm{{\left(\psi'_{J}\right)}^{-1}}_{\opname{unif}}^{-1} = \left(\sup_{x\in\gamma_r}\abs{(\psi'_J(x))^{-1}}\right)^{-1} = \left(\dfrac{1}{\inf_{x\in\gamma_r}\abs{\psi_J'(x)}}\right)^{-1} = \inf_{x\in\gamma_r}\abs{\psi_J'(x)}.$$

By applying the Chain Rule in \eqref{equation:psi_j}, we deduce that
$$
\psi'_{J}(x) = \Big[\left(\rho_X^{-1}\right)'(f_{H^{-1}}(x))\Big] \cdot \Big[\left(H^{-1}\right)'(f_H(x))\Big] \cdot \lambda^{-c_J} \cdot \Big[H'\left(\left(\rho^J_{\Tilde{Z}}\right)^{-1}(x)\right)\Big] \cdot \Big[\left(\left(\rho_{\tilde{Z}}^J\right)^{-1}\right)'(x)\Big],
$$
where $f_{H^{-1}} := H^{-1}\left(\lambda^{-c_J} H\circ\left(\rho_{\tilde{Z}}^J\right)^{-1}\right)$ and $f_H := \lambda^{-c_J} H\circ\left(\left(\rho_{\tilde{Z}}^J\right)^{-1}\right)$. 

Similarly to \eqref{equation:A}, given that each function in the composition is a diffeomorphism, we can set
\begin{equation*}
C:=\bigg(\min_{\zeta\in\mu_r}\abs{\left(\rho_X^{-1}\right)'(\zeta)}\bigg) \cdot \bigg(\min_{z\in H(\mu_r)} \abs{(H^{-1})'(z)}\bigg) \cdot \bigg(\min_{\zeta\in\mu_r} \abs{H'(\zeta)}\bigg) \cdot \bigg(\min_{x\in\gamma_r}\abs{\left(\left(\rho_{\tilde{Z}}^J\right)^{-1}\right)'(x)}\bigg) > 0.
\end{equation*}
This implies that
\begin{equation}\label{equation:inf}
\inf_{x\in\gamma_r}\abs{\psi'_J(x)} \geq C\, \lambda^{-c_J}.
\end{equation}

 Let us denote $T(x) := \lambda^{c_J}\cdot\psi_J(x)$. Notice that $T(x)$ is $\cal{C}^{1,\alpha}$, for some $\alpha>0$, since each of the functions $\rho_{\tilde{Z}}^{-1}$ and $\rho_X^{-1}$ is a $\cal{C}^{1,1}$ diffeomorphism (the same class of differentiability of the fields $X$ and $Y$ that induce them), and $H, H^{-1}$ are $\cal{C}^{1,\beta}$ maps, and in compact domains, products and compositions preserve the H\"{o}lder condition, with some adjustments in the multiplicative and exponential constants (this can be proven using many times the Lipschitz and H\"{o}lder conditions). Keeping this in mind and applying in \eqref{equation:inf}, we deduce the following inequality:
\begin{gather*}
\Bigabs{\abs{\psi'_{J }(x)} - \abs{\psi'_{J }(y)}} \leq \abs{\psi'_{J }(x) - \psi'_{J }(y)} =
\abs{\lambda^{-c_J}\cdot T'(x) - \lambda^{-c_J}\cdot T'(y)} =
\lambda^{-c_J}\cdot\abs{T'(x) - T'(y)} \leq \\
\leq \dfrac{\inf_{\upsilon\in\gamma_r}\abs{\psi'_{J }(\upsilon)}}{C}\cdot D\abs{x-y}^{\alpha} = C^{-1} D \cdot \norm{(\psi'_{J })^{-1}}_{\opname{unif}}^{-1} \cdot \abs{x-y}^{\alpha},
\end{gather*}
for some $D>1, \alpha>0$ and for all $x,y\in\gamma_r$,. We can, then, extend the inequality for $\psi_J^\delta$ and $x,y\in\gamma_{r+\delta}$, since the derivatives $(\psi_J^\delta)'$ in $\gamma_{r+\delta}$ attain values already obtained by $\psi_J'$ in $\gamma_r$ (by construction), so we finally deduce \ref{conf:bdp_alt}.

Therefore, our IFS satisfies all the necessary conditions, making it a CIFS.
\end{proof}

Now, we will demonstrate that the attractor set of the CIFS $\Psi_U$, given by \eqref{atractset}, coincides with the invariant set of the restricted first return map $\pi_U:=\pi|_{\cal{W}_r \cap U}$ given by \eqref{equation:invariantset}.

\begin{proposition}\label{proposition:lambda_equals_delta}
The invariant set $\Lambda_U$ of $\pi_U$ coincides with the attractor set $\Delta_U$ of the CIFS $\Psi_U$.
\end{proposition}
\begin{proof}
First, consider the sets
\begin{equation}\label{Lambdak}
\Lambda_U^k := \bigcap_{0\leq i\leq k}\pi^{-i}\left(\cal{W}_r\cap U\right),\text{ for }k\geq0,\,\, \text{ and }\,\, \Delta_U^k := \bigcup_{\eta\in \cal{J}_U^k}\psi_{\eta}\left(\gamma_r\right),\text{ for } k\geq 1.
\end{equation}
Therefore, the invariant set $ \Lambda_U $ of $\pi_U$, as defined by \eqref{equation:invariantset}, can be expressed as
\begin{equation}\label{charLambda}
\Lambda_U =\bigcap_{k\geq0}\pi^{-k}\left(\cal{W}_r\cap U\right)=\bigcap_{k\geq0}\Lambda_U^k.
\end{equation}
Furthermore, by \Cref{proposition:switch_union_intersection}, the attractor set of  the CIFS $\Psi_U$ is given by
\begin{equation}\label{charDelta}
\Delta_U = \bigcap_{k\geq1}\bigcup_{\eta\in \cal{J}_U^k}\psi_{\eta}\left(\gamma_r\right) = \bigcap_{k\geq1}\Delta_U^k.
\end{equation}

Let us see that the following relationship holds $\Lambda_U^k=\Delta_U^{k+1}$, for every $k\geq 0$. Indeed,
\begin{align*}
\phantom{\iff}\ \ \ x\in \Lambda_U^k 
&\iff \pi^{i}(x)\in J_i\subset \cal{W}_r\cap U\text{ where } J_i\in\cal{J}_U\text{ for }0\leq i\leq k, \text{ and } \pi^{k+1}(x)\in \gamma_r \\
&\iff \pi_{J _k} \circ \pi_{J_{k-1}} \circ \ldots \circ \pi_{J _0}(x) = y, \text{ for some }y\in \gamma_r   \text{ and } J_i\in\cal{J}_U\text{ for }0\leq i\leq k\\
&\iff \psi_{J _k}^{-1} \circ \psi_{J_{k-1}}^{-1}\circ \ldots \circ \psi_{J _0}^{-1}(x) = y, \text{ for some }y\in\gamma_r  \text{ and } J_i\in\cal{J}_U\text{ for }0\leq i\leq k \\
&\iff x = \psi_{J _{0}} \circ \ldots \circ \psi_{J _{k-1}} \circ \psi_{J _k}(y), \text{ for some }y\in\gamma_r  \text{ and } J_i\in\cal{J}_U\text{ for }0\leq i\leq k \\
&\iff x \in \psi_{\eta}\left(\gamma_r\right), \text{ for some }\eta\in \cal{J}_U^{k+1}  \\
&\iff x\in \Delta_U^{k+1}.
\end{align*}
Thus, taking \eqref{charLambda} and \eqref{charDelta} into account, we conclude that $\Lambda_U = \Delta_U$.
\end{proof}

\subsection{The Closure of $\Lambda_U$}
\label{subsection:closure_of_lambda}
Now, we turn our attention to $\cl{\Lambda_U}$, the closure of $\Lambda_U$. The characterization $\Lambda_U=\Delta_U=\bigcap_{k\geq 1} \Delta_U^{k}$ provided by Proposition \ref{proposition:lambda_equals_delta} allows to explore its structure and properties in detail. 
Notice that the sets $\Delta_U^{k}$, $k\geq 1$, as given by \eqref{Lambdak}, satisfy the following recursive property
\begin{equation}\label{equation:recursive_lambda_k}\Delta_U^{k+1} = \bigcup_{\eta\in\cal{J}_U^{k+1}}\psi_\eta(\gamma_r) = \bigcup_{J\in\cal{J}_U}\psi_J(\Delta_U^k), \quad \text{for any $k\geq1$}.
\end{equation}

In order to provide a characterization for the closure $\cl{\Delta_U}$, define 
\begin{equation}\label{QUk}
Q_U^0=\set{q} \quad  \text{ and } \quad Q_U^k := \left(\bigcup_{j=1}^{k}\bigcup_{\eta\in\cal{J}_U^j}\psi_{\eta}(\set{q})\right)\cup \{q\},\text{ for } k\geq1.
\end{equation}
Notice that the set $Q_U^k$, $k\geq0$, also satisfy a recursive property
 \begin{equation}\label{equation:recursive_q_k}
Q_U^{k+1} = \left(\bigcup_{J\in\cal{J}_U}\psi_J(Q_U^k)\right)\cup \{q\}, \quad \text{for any $k\geq0$}.
\end{equation}
Finally, define $$\Delta_U^q := \bigcap_{k\geq1}\left(\Delta_U^k \cup Q_U^{k-1}\right).$$ 

Clearly, since $\Delta_U^k \subset \Delta_U^k \cup Q_U^{k-1}$, we must have $\Delta_U \subset \Delta_U^q$. In the next two propositions, we will show that $\Delta_U^q$ is compact and $\cl{\Delta_U}=\Delta_U^q$.

\begin{proposition}\label{proposition:compacity_of_lambda_q}
The sets $\Delta_U^k \cup Q_U^{k-1}$, for $k\geq 1$, and $\Delta_U^q$ are  compact.
\end{proposition}
\begin{proof}
The proof will be done by induction on $k$.

For $k=1$, we have that $\Delta_U^1\cup Q_U^0 =\left(\cal{W}_r\cap U\right)\cup\set{q}$, which is compact.

Now, assume as inductive hypothesis that the set $\Delta_U^k \cup Q_U^{k-1}$ is compact, for some  $k>1$. In what follows, we will show that $\Delta_U^{k+1} \cup Q_U^{k}$ is also compact.

First, notice that the relationships \eqref{equation:recursive_lambda_k} and \eqref{equation:recursive_q_k} imply
$$\Delta_U^{k+1} \cup Q_U^{k} = \left(\bigcup_{J\in\cal{J}_U}\psi_J(\Delta_U^k)\right) \cup \left(\bigcup_{J\in\cal{J}_U}\psi_J(Q_U^{k-1})\right)\cup \{q\} = \left(\bigcup_{J\in\cal{J}_U}\psi_J (\Delta_U^k\cup Q_U^{k-1})\right) \cup \{q\}.$$
Let $\cal{V}$ be an open cover of $\Delta_U^{k+1} \cup Q_U^{k}$, so in particular it has an open set $V_q \in \cal{V}$ that covers $q$. Denote by $\cal{J}'_U=\{J\in\cal{J}_U: J\subset V_q\}$. Since $L_i^{\gamma_r},R_i^{\gamma_r}$ are converging to $\{q\}$ as $i$ increases, we conclude that $\cal{J}_U\setminus \cal{J}'_U$ is a finite set. Furthermore, since for $J\in\cal{J}_U$, we have $\psi_J(\Delta_U^{k} \cup Q_U^{k-1})\subset J$, it follows that $\psi_J(\Delta_U^{k} \cup Q_U^{k-1})\subset V_q$ for every $J\in  \cal{J}'_U$. Finally, from the inductive hypothesis, we have that
$$\bigcup_{J\in\cal{J}_U\setminus\cal{J}'_U}\psi_J(\Delta_U^{k} \cup Q_U^{k-1})$$ is compact since it is a finite union of compact sets. Therefore, it has finite subcover $\cal{V}'\subset\cal{V}$. Hence,  
$\cal{V}'\cup\set{V_q}\subset\cal{V}$ is a finite subcover of $\Delta_U^{k+1} \cup Q_U^{k}$, which implies that it is compact.

In addition, $\Delta_U^q$ is compact since it is an intersection of compact sets.\end{proof}

\begin{proposition}
\label{proposition:tilde_lambda_equals_closure_lambda}
$\Delta_U^q = \cl{\Delta_U}$.
\end{proposition}
\begin{proof}

First of all, since $\Delta_U\subset\Delta_U^q$, Proposition \ref{proposition:compacity_of_lambda_q} implies that $\cl{\Delta_U}\subset\Delta_U^q$. Thus, it remains to show that $\Delta_U^q\subset\cl{\Delta_U}$ to conclude that $\Delta_U^q = \cl{\Delta_U}$.

Let $x\in\Delta_U^q = \bigcap_{k\geq1}\left(\Delta_U^k \cup Q_U^{k-1} \right)$, that is, $x\in \Delta_U^k \cup Q_U^{k-1}$  for all $k\geq1$. First, if $x\in \Delta_U^k$, for all $k\geq1$, then $x\in\Delta_U\subset\cl{\Delta_U}$ and there is nothing to prove. Thus, suppose that $x\in Q_U^{\kappa}$, for some $0\leq \kappa\leq k-1$. If $\kappa=0$, then $x=q\in\cl{\Delta_U}$. Now, assume that $\kappa\geq 1$. This means that there exists $\ov\eta = (J_1,\ldots,J_{\ell})\in\cal{J}_U^{\ell}$, for some $\ell\leq \kappa$, such that $x = \psi_{\ov\eta}(q) = \psi_{(J_1,\ldots,J_{\ell})}(q)$. Consider a sequence $(x_i)_{i\geq0}$ in $\Delta_U$ converging to $ q$. Since $\psi_{\ov\eta}$ is continuous, we have that
$$x = \psi_{\ov\eta}(q) = \psi_{\ov\eta}\left(\lim_{i\to\infty}x_i\right) = \lim_{i\to\infty}\psi_{\ov\eta}(x_i).$$
Since $\psi_{\ov\eta}(x_i)\in\Delta_U$, because $\Delta_U$ is $\pi$-invariant, we conclude that $x$ the limit of a sequence of elements of $\Delta_U$, therefore $x\in\cl{\Delta_U}$.
\end{proof}

Now, recall that 
\begin{equation}\label{QU}
Q_U = \left(\bigcup_{k\geq0}\pi^{-k}(q)\right)\cap U=\bigcup_{k\geq0}\pi_U^{-k}(q),
\end{equation}
as defined in the statement of Theorem \ref{theorem}. The next proposition details the structure of $\cl{\Lambda_U}$.

\begin{proposition}\label{prop:closurechara}
The set $Q_U$ is countable and $\cl{\Delta_U} = \Delta_U\ \dot{\cup}\ Q_U$ or, equivalently, $\cl{\Lambda_U} = \Lambda_U\ \dot{\cup}\ Q_U$. 
\end{proposition}
\begin{proof}
First of all, we can easily see that
\begin{equation}\label{QUk2}
 Q_U^k= \bigcup_{j=0}^{k}\pi_{U}^{-j}(\set{q}),\text{ for } k\geq 0.
 \end{equation}
This implies that the set $Q_U$, as defined in \eqref{QU}, can be written as
\begin{equation}\label{QU2}
Q_U=\bigcup_{k\geq 0} Q_U^k.
\end{equation}
Notice that, since $\Psi_U$ is a countable set of functions and taking the relationships \eqref{QU2} and \eqref{QUk} into account, we conclude that $Q_U$ is a countable union of countable sets, therefore it is countable. 

In what follows, we proceed with of proof of the equality $\cl{\Delta_U} = \Delta_U\ \dot{\cup}\ Q_U$.

First, we will prove that  $\cl{\Delta_U}\subset  \Delta_U\cup   Q_U$. Let $x\in\cl{\Delta_U} = \bigcap_{k\geq1}\left(\Delta_U^k\cup Q_U^{k-1}\right)$, that is, $x\in\Delta_U^k\cup Q_U^{k-1}$, for all $k\geq1$. If $x\in\Delta_U^k$, for all $k\geq1$, taking \eqref{charDelta} into account, we have that $x\in\Delta_U\subset \Delta_U\cup   Q_U$. Otherwise, for some $\ell\geq 0$ and taking  \eqref{QU2} into account, $x\in Q_U^{\ell}\subset Q_U\subset \Delta_U\cup   Q_U$. Therefore, $\cl{\Delta_U} \subset \Delta_U \cup Q_U$.

Now, for proving the opposite inclusion, $ \Delta_U\cup   Q_U\subset \cl{\Delta_U}$, let $x\in \Delta_U \cup Q_U$. Since $\Delta_U\subset\cl{\Delta_U}$, it only remains to consider the case $x\in Q_U$. Taking \eqref{QU2} into account, let $\kappa\geq0$ 
be the first non-negative integer satisfying  $x\in Q_U^\kappa$.  If $\kappa=0$, then $x = q\in Q_U^k$, for all $k\geq0$, so $x \in\bigcap_{k\geq1}\left(\Delta_U^k\cup Q_U^{k-1}\right) =\cl{\Delta_U}$. On the other hand, assume $\kappa>0$. Notice that, from \eqref{QUk2}, $x\in Q_U^\kappa\subset Q_U^k$ for every $k\geq\kappa$. Thus, let us prove that $x\in \Delta_U^k$ for every $0\leq k\leq\kappa.$ Indeed, from \eqref{QUk}, $x\in Q_U^\kappa$ implies that $x\in\bigcup_{\eta\in\cal{J}_U^\ell}\psi_{\eta}(\set{q})$, for some $1\leq \ell\leq\kappa$. However, if $\ell<\kappa,$ then $x\in Q_U^{\ell}$ contradicting the fact that $\kappa\geq0$ is the first non-negative integer satisfying  $x\in Q_U^\kappa$. Thus, $x\in\bigcup_{\eta\in\cal{J}_U^\kappa}\psi_{\eta}(\set{q})$, that is, $x= \psi_{\ov{\eta}}(q)$, for some $\ov{\eta} = (J_1, \ldots, J_\kappa)\in\cal{J}_U^\kappa$. This means that $x= \psi_{J_1}\circ\cdots\circ\psi_{J_\kappa}(q)\in \Delta_U^k$, for $1\leq k\leq \kappa$. Therefore, $x\in \Delta_U^k\cup Q_U^{k-1}$ for every $k\geq 1$, which implies that $x\in \bigcap_{k\geq1}\left(\Delta_U^k\cup Q_U^{k-1}\right)=\cl{\Delta_U}$.

Furthermore, the union is disjoin, given that $\Delta_U$ is $\pi_U$-invariant and $q\not\in\Delta_U$.
\end{proof}

\subsection{Proof of Theorem \ref{theorem}}
\label{subsec:proof}

\begin{proof}[Proof of \ref{theorem:hausdorff_and_lebesgue}]
Let $U$ be a neighborhood where $\Psi_U$ is a CIFS, whose existence is guaranteed by Proposition \ref{prop:conformal}. Additionally, we can choose $U$ such that 
\begin{equation}\label{cond}
\sum_{J \in \mathcal{J}_U} \dfrac{1}{A \lambda^{c_J}} < 1.
\end{equation} This choice is possible since, from \eqref{equation:cJ}, $c_J =  i-1,$ for $J\in\{ L_i^{\gamma_r}, R_i^{\gamma_r}\}$ and $i\geq0.$

Define
\begin{align*}
P_U(t) := \sum_{J\in\cal{J}_U}\abs{\sup_{x\in\gamma_r}{\psi_{J}'(z)}}^t.
\end{align*}
This function is related to the notion of the \emph{pressure} of an IFS (see \cite[Chapter 3]{mu} and \cite[Chapter 6]{mauldin}). Notice that $P_U(t)$ is positive and unbounded as $t\to 0^+$. In addition, from \Cref{remark:uniformcontractive}, $\abs{\sup_{x\in\gamma_r}{\psi_{J}'(z)}}\leq s<1$, thus $P_U(t)$ is strictly decreasing. Moreover, the relationship \eqref{s}, from the proof of \Cref{remark:uniformcontractive}, says that 
\begin{equation*}\label{cond2}
P_U(t) \leq \sum_{J\in\cal{J}_U}\left(\dfrac{1}{A \lambda^{c_J}}\right)^t,
\end{equation*} 
which, taking \eqref{cond} into account, implies $P_U(1)< 1$. Thus, there exists $0<\tilde{t}<1$ such that $P_U(\tilde{t})=1$.

On the one hand, since $\abs{\psi_J(x) - \psi_J(y)} \leq \sup_{\upsilon \in \gamma_r} \abs{\psi'_J(\upsilon)} \cdot \abs{x - y}$ for each $J \in \cal{J}_U$, \Cref{prop:calculation} and \Cref{prop:sup} imply the upper bound $\hausdorff{\Lambda_U} \leq \tilde{t} < 1$. Consequently, \ref{hausdorffprop:lebesgue_zero} implies $\opname{m}_1(\Lambda_U) = 0$. On the other hand, by applying \Cref{prop:hausdorffzero} and the lower bounds for $\abs{\psi'_J}$ provided by \Cref{remark:uniformcontractive}, we obtain the lower bound $\hausdorff{\Lambda_U} > 0$.
\end{proof}

\begin{proof}[Proof of \ref{theorem:closure_of_lambda}]
Proposition \ref{prop:closurechara} provides that $\cl{\Lambda_U} = \Lambda_U\ \dot{\cup}\ Q_U$, where $Q_U$ is a countable set and, therefore, $\hausdorff{Q_U} = 0$. Thus, \ref{hausdorffprop:stability} and \ref{theorem:hausdorff_and_lebesgue} imply that $\hausdorff{\cl{\Lambda_U}} = \hausdorff{\Lambda_U}<1$. Again, from \ref{hausdorffprop:lebesgue_zero}, we have $\opname{m}_1(\cl{\Lambda_U}) = 0$.

Now we will prove that $\cl{\Lambda_U}$ is a Cantor set, that is a non-empty metric space which is compact, totally disconnected and perfect.

Clearly, $\cl{\Lambda_U}$ inherits the metric structure from $\gamma_r$ and is compact.
Besides, given that $\hausdorff{\cl{\Lambda_U}}<1$, \ref{hausdorffprop:totally_disconnected} says that $\cl{\Lambda_U}$ is a totally disconnected set.
 In order to see that it is perfect, let $x\in\Lambda_U$. From \eqref{atractset}, we know that $x$ is represented by some $\ov{\eta}=(J_1, J_2, J_3, \ldots)\in\cal{J}_U^{\N}$, that is, $x=\proj(\ov{\eta})$, where $\proj$ denotes the projection defined in \eqref{proj}, which is well-defined due to the uniformly contractive property \ref{conf:unifcontract} satisfied by the CIFS $\Psi_U$ (see Remark \ref{rem:proj}). Taking a sequence $\set{x_j}_{j\geq1}\subset\Lambda_U$ satisfying $x_k\in \psi_{(J_1,\ldots,J_k)}(\gamma_r)$, but $x_k\not\in\psi_{(J_1,\ldots,J_k,J_{k+1})}(\gamma_r)$, for $k\geq1$, we have that $\set{x_j}_{j\geq 1}\subset \Lambda_U\setminus\set{x}$ and $x_j\to x$. This implies that every point of $\Lambda_U$ is an accumulation point. Since the closure of a set does not add any isolated points, $\cl{\Lambda_U}$ is a perfect set. This concludes the proof.
\end{proof}

\section*{Acknowledgments}

MGCC is partially supported by Coordena\c{c}\~{a}o de Aperfei\c{c}oamento de Pessoal de N\'{i}vel Superior (CAPES) grant 001. DDN is partially supported by S\~{a}o Paulo Research Foundation (FAPESP) grants 2022/09633-5, 2019/10269-3, and 2018/13481-0, and by Conselho Nacional de Desenvolvimento Cient\'{i}fico e Tecnol\'{o}gico (CNPq) grant 309110/2021-1. GP is partially supported by S\~{a}o Paulo Research Foundation (FAPESP) grants 2022/07762-2 and 2018/13481-0.

\bibliographystyle{abbrv}
\bibliography{references}

\end{document}